\theoremstyle{plain}
\newtheorem{theorem}{Theorem}[section] % theorems are numbered for each section
\newtheorem{lemma}[theorem]{Lemma} % lemmas, corollaries, etc., are numbered consecutively
\theoremstyle{definition} % these environments are in roman style
\newtheorem{remark}[theorem]{Remark}
\newtheorem{example}[theorem]{Example}
\numberwithin{equation}{section} % equations are numbered for each section
\def\N{\mathbb N}
\def\R{\mathbb R}
\def\C{\mathbb C}
\def\supp{supp}
	\title{NORMING SETS ON A COMPACT COMPLEX MANIFOLD}
\author{Tanaus\'u Aguilar-Hernández}
\date{}
\begin{document}
	
\maketitle

\begin{abstract}
		We describe the norming
	sets for the space of global holomorphic sections to a $k$-power of a positive holomorphic line bundle on a compact complex manifold $X$. We characterize in metric terms the sequence of measurable subsets $\{G_{k}\}_{k}$ of $X$ such that there is a constant $C > 0$ where
	\begin{align*}
	\|s\|^{2}\leq C \int_{G_{k}} |s(z)|^{2}\ dV(z)
	\end{align*}
	for every $s\in H^{0}(L^{k})$ and  for all $k\in\N$.
\end{abstract}

\section{Introduction}

Let $\mathcal{L}$ be a certain space of functions. The problem lies in giving a metric characterization of the set $G$ that verifies that the integral over $G$ is comparable to the norm for every function of $\mathcal{L}$. As we will see below this problem has already proved for some space of functions. 

	In \cite{LogSer} it appears the proof of the Logvinenko-Sereda theorem for  functions of the Paley-Wiener space $PW_{K}$ for a fixed $K$:
	\begin{theorem}
		For a measurable subset $G\subset \R^{n}$ the following are equivalent:
		\begin{enumerate}
			\item[(1)] There is a constant $C>0$ such that
			\begin{align*}
			\int_{\R^{n}} |f|^{2}\ dm\leq C\int_{G} |f|^{2}\ dm
			\end{align*}
			for every $f\in PW_{K}$. We will say that $G$ is a \emph{norming set for the Paley-Wiener space}.
			\item[(2)] There is a cube $Q\subset\R^{n}$ ans a constant $\gamma>0$ such that 
			\begin{align*}
			m((Q+x)\cap G )\geq \gamma
			\end{align*} 		
			for all $x\in\R^n$. We will say that $G$ is \emph{relatively dense in $\R^{n}$}.
		\end{enumerate}
	\end{theorem}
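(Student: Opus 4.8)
The plan is to prove the two implications separately; the substantive one is $(2)\Rightarrow(1)$.

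\emph{The direction $(1)\Rightarrow(2)$.} I would argue by contraposition. Assume $G$ is not relatively dense. Choose a ball $B(\xi_{0},r)$ inside the interior of $K$ and a nonzero $g\in PW_{K}$ with $\widehat{g}$ supported in $B(\xi_{0},r)$, selected so that $g$ is bounded, lies in $L^{2}(\R^{n})$, and has most of its $L^{2}$ mass on a fixed cube $Q_{0}$; concretely one may take a modulated product of functions of the form $(\sin(at_{j})/(at_{j}))^{2}$. Given $\varepsilon>0$, since $G$ is not relatively dense with respect to $Q_{0}$ there is a vector $x$ with $m((Q_{0}+x)\cap G)$ arbitrarily small. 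Now $g(\cdot-x)\in PW_{K}$, because translation only multiplies $\widehat{g}$ by a unimodular factor; its $L^{2}$ norm is the fixed number $\|g\|_{2}$; and $\int_{G}|g(z-x)|^{2}\,dm(z)$ is at most $\|g\|_{\infty}^{2}\,m((Q_{0}+x)\cap G)+\int_{\R^{n}\setminus Q_{0}}|g|^{2}$, both terms being $<\varepsilon$. Hence no constant $C$ can serve in (1).

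\emph{The direction $(2)\Rightarrow(1)$.} Here I would follow a Kovrijkine-type scheme. Tile $\R^{n}$ by a grid of cubes $\{Q_{j}\}$ of a fixed side length depending only on $K$ and on the cube in (2); by relative density each $Q_{j}$ satisfies $m(Q_{j}\cap G)\geq\gamma'\,|Q_{j}|$ for a uniform $\gamma'>0$. Fix $f\in PW_{K}$, let $Q_{j}^{*}$ be a fixed dilate of $Q_{j}$, and call $Q_{j}$ \emph{good} if $\int_{Q_{j}^{*}}|f|^{2}\leq A\int_{Q_{j}}|f|^{2}$ for a large constant $A$. Since the cubes $Q_{j}^{*}$ have bounded overlap, $\sum_{j}\int_{Q_{j}^{*}}|f|^{2}\leq C_{n}\|f\|_{2}^{2}$, so the bad cubes carry at most $(C_{n}/A)\|f\|_{2}^{2}\leq\tfrac12\|f\|_{2}^{2}$ once $A$ is chosen large. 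On a good cube one uses that $f$ extends to an entire function of exponential type $\leq\mathrm{diam}(K)$ whose growth on the complex polydisc over $Q_{j}^{*}$ is controlled by $A\int_{Q_{j}}|f|^{2}$, so that $f$ there behaves like a holomorphic function with a bounded number of zeros; a Remez-type (Tur\'{a}n--Nazarov-type) inequality then gives $\int_{Q_{j}}|f|^{2}\leq C\int_{Q_{j}\cap G}|f|^{2}$ with $C=C(n,\gamma',A)$ independent of $f$ and $j$. Summing over good cubes, $\|f\|_{2}^{2}\leq 2\sum_{\text{good }j}\int_{Q_{j}}|f|^{2}\leq 2C\sum_{j}\int_{Q_{j}\cap G}|f|^{2}=2C\int_{G}|f|^{2}$, which is (1).

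\emph{Expected main obstacle.} The delicate point is the local step on good cubes: converting the exponential-type bound together with the energy ratio $A$ into a genuine Remez-type inequality over a subset of measure $\geq\gamma'\,|Q_{j}|$, with a constant depending only on $n,\gamma',A$. This needs the complex-analytic input — a Jensen-type bound on the number of zeros in terms of $A$, followed by a Remez inequality in the spirit of Nazarov's lemma — and is where the real work lies; the good/bad dichotomy and the final summation are comparatively routine.
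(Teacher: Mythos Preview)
The paper does not give its own proof of this statement: it is quoted in the introduction as a known background result, with proofs attributed to \cite{LogSer} and \cite[pp.~112--115]{VHBJ}. So there is no in-paper argument to compare against directly.

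Your outline is essentially correct and follows the Kovrijkine route. One small slip in $(1)\Rightarrow(2)$: you fix $g$ and the cube $Q_{0}$ before introducing $\varepsilon$, yet then claim the tail $\int_{\R^{n}\setminus Q_{0}}|g|^{2}$ is $<\varepsilon$. That tail is a fixed number once $g$ and $Q_{0}$ are chosen. The repair is trivial---let $Q_{0}$ enlarge with $\varepsilon$ (the tail of a fixed $L^{2}$ function tends to zero), or simply note that the ratio $\|g\|_{2}^{2}\big/\int_{G}|g(\cdot-x)|^{2}$ already exceeds any prescribed bound once $Q_{0}$ is taken large enough.

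For perspective, the paper does prove the analogous Theorem~\ref{TeoLogSe} for sections of positive line bundles, and there the strategy for $(2)\Rightarrow(1)$ differs from yours. It also begins with a good/bad decomposition---the bad set $\mathcal{A}$ consists of points where $|s|^{2}$ is far below its local average, and Lemma~\ref{l2} shows $\mathcal{A}$ carries negligible mass---but the key local inequality on the good set is established not via Remez but by contradiction: a rescaling and normal-families extraction (Montel) produces a holomorphic limit vanishing on the support of a weak-$\ast$ limit of the indicator measures, and a Hausdorff-dimension count through Frostman's lemma gives the contradiction. Your Kovrijkine scheme is more constructive and delivers explicit constants; the paper's compactness argument is softer but transfers more readily to the manifold setting, where the Remez/Tur\'{a}n--Nazarov machinery is not directly available.
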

See also \cite[pag 112-115]{VHBJ}.\newline

  The previous theorem has been extended to functions in the Bergman space $A^{p}$ in the unit disk $\mathbb{D}$ in \cite{Lue1}:
\begin{theorem}
	For a measurable subset $G\subset \mathbb{D}$ the following are equivalent:
	\begin{enumerate}
		\item[(1)] There is a constant $C>0$ such that
		\begin{align*}
		\int_{\mathbb{D}} |f|^{p}\ dm \leq C\int_{G} |f|^{p}\ dm
		\end{align*}
		for every $f\in A^{p}$. We will say that $G$ is a \emph{norming set for the Bergman space}.
		\item[(2)] There is a constant $\delta>0$ and radius $R\in (0,1)$ such that
		\begin{align*}
		m(G\cap D(a,R))>\delta m(D(a,R))
		\end{align*}
		for all $a\in\mathbb{D}$. We will say that $G$ is \emph{relatively dense in the disc $\mathbb{D}$}.
	\end{enumerate}
\end{theorem}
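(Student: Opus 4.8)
\medskip
\noindent\textbf{Sketch of proof.}\ The statement is an equivalence, and I would prove the two implications by rather different means. Throughout I write $\varphi_a(z)=(a-z)/(1-\bar a z)$, read $D(a,R)$ as the pseudohyperbolic disc $\{z:|\varphi_a(z)|<R\}$, and freely use the standard comparisons $|1-\bar a z|\asymp 1-|a|^2$ for $z\in D(a,R)$, $m(D(a,R))\asymp(1-|a|^2)^2$, and $\int_{\mathbb{D}}|1-\bar a z|^{-c}\,dm(z)\asymp(1-|a|^2)^{2-c}$ for $c>2$ (implied constants depending on $R$, $c$). For $(1)\Rightarrow(2)$, the soft direction, I would argue by contraposition using, for a fixed integer $b$ with $pb\ge 2$, the test functions $f_a(z)=\Bigl(\frac{1-|a|^2}{(1-\bar a z)^2}\Bigr)^{b}\in A^p$. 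Routine estimates give $\|f_a\|_{A^p}^p\asymp(1-|a|^2)^{2-3pb}$, show $|f_a|^p\asymp(1-|a|^2)^{-3pb}$ on $D(a,R)$, and — via the substitution $z=\varphi_a(w)$ — bound $\int_{\mathbb{D}\setminus D(a,R)}|f_a|^p\,dm\le C_{p,b}(1-R^2)\,\|f_a\|_{A^p}^p$. Assuming (1) holds with constant $C$, I would pick $R$ so near $1$ that $C_{p,b}(1-R^2)\le\tfrac1{2C}$; were (2) to fail, for this $R$ and every $n$ there would be $a_n$ with $m(G\cap D(a_n,R))\le\tfrac1n m(D(a_n,R))$, whence
\begin{align*}
\int_G|f_{a_n}|^p\,dm
&\le\int_{G\cap D(a_n,R)}|f_{a_n}|^p\,dm+\int_{\mathbb{D}\setminus D(a_n,R)}|f_{a_n}|^p\,dm\\
&\le\Bigl(\tfrac{C_1}{n}+\tfrac1{2C}\Bigr)\|f_{a_n}\|_{A^p}^p,
\end{align*}
which contradicts (1) as $n\to\infty$. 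Nothing here changes for $0<p<1$.

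For $(2)\Rightarrow(1)$ I would first record that relative density at scale $R_0$ implies relative density at every scale $R\in[R_0,1)$ — trivially, since $D(a,R)\supseteq D(a,R_0)$ and the two discs have comparable area — but \emph{not} at smaller scales, so the decomposition must live at scale $R_0$ or larger. Then I fix $R_0<r_1<r_2<1$ and a pseudohyperbolic lattice $\{a_j\}$ with $\{D(a_j,r_1)\}$ covering $\mathbb{D}$ and $\{D(a_j,r_2)\}$ of bounded overlap $N$. Pulling back by $\varphi_{a_j}$ and setting $g_j=(f\circ\varphi_{a_j})\,(\varphi_{a_j}')^{2/p}$, a function holomorphic on $\mathbb{D}$, turns $m(G\cap D(a_j,r_1))\ge\delta_1 m(D(a_j,r_1))$ into: there is $E_j\subseteq D(0,r_1)$ with $m(E_j)\ge\delta''m(D(0,r_1))$ (a bound uniform in $j$) such that $\int_{D(a_j,r)}|f|^p\,dm=\int_{D(0,r)}|g_j|^p\,dm$ for $r\in\{r_1,r_2\}$ and $\int_{G\cap D(a_j,r_1)}|f|^p\,dm=\int_{E_j}|g_j|^p\,dm$. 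Since $\int_{\mathbb{D}}|f|^p\,dm\le\sum_j\int_{D(a_j,r_1)}|f|^p\,dm$ by the covering property, everything reduces to a uniform local estimate on the fixed disc $D(0,r_1)$.

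The estimate I would aim for is: for every $\varepsilon>0$ there is $C_\varepsilon$ such that
\begin{align*}
\int_{D(0,r_1)}|g|^p\,dm\ \le\ C_\varepsilon\int_E|g|^p\,dm+\varepsilon\int_{D(0,r_2)}|g|^p\,dm
\end{align*}
for all holomorphic $g$ on $\mathbb{D}$ and all measurable $E\subseteq D(0,r_1)$ with $m(E)\ge\delta''m(D(0,r_1))$. This is the crux, and I expect the main obstacle to be precisely that the naive inequality \emph{without} the last term is false (take $g(z)=z^N$ with $N$ large and $E$ a small disc about the origin): the error term is unavoidable, and the whole game is to choose $\varepsilon=1/(2N)$, apply the estimate to each $g_j$, transfer back and sum, so that bounded overlap gives $\int_{\mathbb{D}}|f|^p\,dm\le C_\varepsilon N\int_G|f|^p\,dm+\varepsilon N\int_{\mathbb{D}}|f|^p\,dm$, and then absorb the last term to conclude $\int_{\mathbb{D}}|f|^p\,dm\le 2C_\varepsilon N\int_G|f|^p\,dm$, which is (1). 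The local estimate itself I would prove by contradiction and normal families: a failing sequence may be normalized so that $\int_{D(0,r_1)}|g_n|^p=1$, $\int_{E_n}|g_n|^p\to0$, and $\int_{D(0,r_2)}|g_n|^p$ stays bounded; by the sub-mean value property of the subharmonic function $|g_n|^p$ the last bound makes $\{g_n\}$ uniformly bounded on $\overline{D(0,r_1)}$, so a subsequence converges uniformly there to a holomorphic $g$ with $\int_{D(0,r_1)}|g|^p=1$ (hence $g\not\equiv0$) and $\int_{E_n}|g|^p\to0$; but a nonzero holomorphic function is bounded below off a small neighbourhood of its finitely many zeros, so $\int_{E_n}|g|^p$ cannot tend to $0$, a contradiction. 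It is exactly the a priori bound on $\int_{D(0,r_2)}|g_n|^p$ — the contribution of the error term — that prevents mass from escaping to $\partial D(0,r_1)$ and makes this compactness argument valid; for $0<p<1$ one needs only that $|g|^p$ is still subharmonic and that $|s+t|^p\le|s|^p+|t|^p$ takes the place of the triangle inequality.
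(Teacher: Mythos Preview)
Your sketch is correct, but be aware that the paper does not itself prove this theorem: it is quoted in the introduction from Luecking's paper \cite{Lue1} as background, and the paper's own work is the line--bundle analogue (Theorem~\ref{TeoLogSe}). So there is no proof in the present paper to match against directly. One small discrepancy: the Remark following the statement fixes $D(a,R)=\{z:|z-a|<R(1-|a|)\}$, Luecking's Euclidean discs, whereas you read $D(a,R)$ as a pseudohyperbolic disc. The two families are mutually comparable, so the theorems are equivalent, but you are reinterpreting the hypothesis. Also, in your $(1)\Rightarrow(2)$ computations the exponent $|f_a|^p\asymp(1-|a|^2)^{-3pb}$ on $D(a,R)$ should be $(1-|a|^2)^{-pb}$, and correspondingly $\|f_a\|_{A^p}^p\asymp(1-|a|^2)^{2-pb}$; the argument is unaffected.

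It is still instructive to compare your $(2)\Rightarrow(1)$ with the paper's proof of the same implication in Theorem~\ref{TeoLogSe}, since the strategies differ. You fix a pseudohyperbolic lattice with bounded overlap, pull back each piece by a M\"obius map to a fixed disc, prove a local inequality with an $\varepsilon$--error term by a normal--families contradiction, then sum and absorb. The paper instead makes a \emph{pointwise} partition of $X$ into $\mathcal{A}=\{|s|^2<\varepsilon\cdot\text{local average}\}$ and $\mathcal{F}=X\setminus\mathcal{A}$, shows via Fubini (Lemma~\ref{l2}) that $\int_{\mathcal{A}}|s|^2$ is negligible, and then establishes the pointwise estimate $|s(w)|^2\le C\,V(B)^{-1}\int_{B\cap G}|s|^2$ for $w\in\mathcal{F}$ by contradiction, normal families, and Frostman's lemma. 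Your lattice--plus--error--term route is essentially Luecking's original and exploits the M\"obius action to make all local problems identical; the paper's good/bad--set route avoids any lattice or group action, which is exactly what is needed on a general compact manifold. For $(1)\Rightarrow(2)$ the two arguments coincide in spirit: your test functions $f_a$ play the same role as the normalized Bergman--kernel peak sections of Lemma~\ref{L1}.
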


\begin{remark}
	In \cite{Lue1} a different geometry than usual will be used. The disks $D(a,R)$ will be of the form
	\begin{align*}
	D(a,R)=\{z\in\mathbb{D}\ :\ |z-a|<R(1-|a|)\},
	\end{align*}
	where $a\in\mathbb{D}$ and $R\in (0,1)$.
\end{remark} 

It should be noted that using similar arguments to those used in \cite{Lue1}  one can prove the Logvinenko-Sereda theorem for the classical Fock space $\mathcal{F}^{2}_{2|z|^{2}}(\C^{n})$:
\begin{theorem}
	For a a measurable subset $G\subset \C^{n}$ the following are equivalent:
	\begin{enumerate}
		\item[(1)] There is a constant $C>0$ such that
		\begin{align*}
		\int_{\C^{n}} |f|^{2} e^{-2|z|^{2}}\ dm(z)\leq C\int_{G} |f|^{2} e^{-2|z|^{2}}\ dm(z)
		\end{align*}
		for every $f\in \mathcal{F}^{2}_{2|z|^{2}}(\C^{n})$. We will say that $G$ is a \emph{norming set for the classical Fock space.}
		\item[(2)] There is a constant $\delta>0$ and a radius $R>0$ such that 
		\begin{align*}
		m(G\cap G(z,R))\geq \delta m(B(a,R))
		\end{align*}
		for all $z\in\C^{n}$. We will say that $G$ is \emph{relatively dense in $\C^{n}.$}
	\end{enumerate}
\end{theorem}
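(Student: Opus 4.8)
\emph{Plan of proof.} I would prove the two implications separately, the first by testing on the normalized reproducing kernels and the second by adapting the covering argument from \cite{Lue1}. Throughout, the essential structural fact is the \emph{translation homogeneity} of the Fock space: writing $\langle z,a\rangle=\sum_{j}z_{j}\overline{a_{j}}$, $k_{a}(z):=e^{2\langle z,a\rangle-|a|^{2}}$ and, more generally, $W_{a}g(z):=g(z-a)\,e^{2\langle z,a\rangle-|a|^{2}}$, a one-line computation gives $|W_{a}g(z)|^{2}e^{-2|z|^{2}}=|g(z-a)|^{2}e^{-2|z-a|^{2}}$ (so $|k_{a}(z)|^{2}e^{-2|z|^{2}}=e^{-2|z-a|^{2}}$). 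Hence each $W_{a}$ maps entire functions to entire functions, $W_{-a}W_{a}=\mathrm{Id}$, $W_{a}$ is a surjective isometry of $\mathcal{F}^{2}_{2|z|^{2}}(\C^{n})$, and $\int_{S}|W_{a}g|^{2}e^{-2|z|^{2}}\,dm=\int_{S-a}|g|^{2}e^{-2|w|^{2}}\,dm$ for measurable $S$. Since Euclidean translation acts transitively, preserves Lebesgue measure, and carries $B(a,R)$ to $B(0,R)$, this is exactly what makes condition (2) the correct metric condition, and it is the device I would use to cope with the unbounded oscillation of $e^{-2|z|^{2}}$ on balls of fixed radius.

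\emph{Proof that $(1)\Rightarrow(2)$.} Assume (1) with constant $C$ and set $\kappa:=\int_{\C^{n}}e^{-2|w|^{2}}\,dm(w)$, so $\|k_{a}\|^{2}=\kappa$ for all $a$. Feeding $k_{a}$ into (1) gives $\kappa\le C\int_{G}e^{-2|z-a|^{2}}\,dm(z)$. Splitting the integral over $B(a,R)$ and its complement, $\int_{G}e^{-2|z-a|^{2}}\,dm\le m(G\cap B(a,R))+\varepsilon(R)$ with $\varepsilon(R):=\int_{|w|\ge R}e^{-2|w|^{2}}\,dm(w)\to 0$. Choosing $R$ with $C\varepsilon(R)\le\kappa/2$ forces $m(G\cap B(a,R))\ge\kappa/(2C)$ for every $a\in\C^{n}$, i.e. relative density with this $R$ and $\delta:=\kappa/(2C\,m(B(0,R)))$. (The same computation shows the converse impossibility: if (2) fails one can pick $a$ making $\int_{G}|k_{a}|^{2}e^{-2|z|^{2}}\,dm$ arbitrarily small while $\|k_{a}\|^{2}=\kappa$.)

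\emph{Proof that $(2)\Rightarrow(1)$: the local estimate.} The heart is a Remez / three-spheres inequality. \textbf{Key Lemma.} For $\rho>0$, $\delta\in(0,1]$ there are $\theta=\theta(\delta,n)\in(0,1)$ and $C=C(\delta,n)>0$ such that
\[
\int_{B(0,\rho)}|g|^{2}\,dm\;\le\;C\Big(\int_{B(0,6\rho)}|g|^{2}\,dm\Big)^{1-\theta}\Big(\int_{E}|g|^{2}\,dm\Big)^{\theta}
\]
for every entire $g$ and every measurable $E\subseteq B(0,\rho)$ with $m(E)\ge\delta\,m(B(0,\rho))$. I would prove it as follows (the case $g\equiv 0$ being trivial, so $\int_{E}|g|^{2}>0$). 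Normalise $\sup_{\overline{B(0,5\rho)}}|g|=1$ and put $v:=-\log|g|\ge 0$, superharmonic on $B(0,5\rho)$. For $x\in B(0,\rho)$ the ball $B(x,\rho+|x|)$ lies in $B(0,5\rho)$ and contains $E$, so the super-mean-value property together with $v\ge 0$ give $v(x)\ge \delta\big(\tfrac{\rho}{\rho+|x|}\big)^{2n}\overline v_{E}\ge\theta\,\overline v_{E}$ with $\theta:=\delta\,2^{-2n}$ and $\overline v_{E}$ the mean of $v$ on $E$. Jensen's inequality for $-\log$ and Cauchy--Schwarz give $\overline v_{E}\ge-\tfrac12\log\big(m(E)^{-1}\!\int_{E}|g|^{2}\big)$, hence $\sup_{B(0,\rho)}|g|^{2}=e^{-2\inf_{B(0,\rho)}v}\le\big(m(E)^{-1}\!\int_{E}|g|^{2}\big)^{\theta}$; multiplying by $m(B(0,\rho))$, restoring the normalisation through the sub-mean-value bound $\sup_{\overline{B(0,5\rho)}}|g|^{2}\le m(B(0,\rho))^{-1}\!\int_{B(0,6\rho)}|g|^{2}$ for the subharmonic $|g|^{2}$, and using $m(E)\ge\delta\,m(B(0,\rho))$ finishes it.

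\emph{Proof that $(2)\Rightarrow(1)$: globalisation, and where the difficulty lies.} Take $\rho:=R_{0}$ (the radius from (2)) and a maximal $\rho$-separated set $\{c_{j}\}$, so the balls $B_{j}:=B(c_{j},\rho)$ cover $\C^{n}$ and the balls $B(c_{j},6\rho)$ have overlap $\le N_{0}=N_{0}(n)$. For $f\in\mathcal{F}^{2}_{2|z|^{2}}(\C^{n})$ set $d\lambda:=|f(z)|^{2}e^{-2|z|^{2}}\,dm(z)$, a finite measure. With $g_{j}:=W_{-c_{j}}f$ (entire) and $E_{j}:=(G-c_{j})\cap B(0,\rho)$, the isometry identities give $\lambda(B_{j})=\int_{B(0,\rho)}|g_{j}|^{2}e^{-2|w|^{2}}dm$, $\lambda(B(c_{j},6\rho))=\int_{B(0,6\rho)}|g_{j}|^{2}e^{-2|w|^{2}}dm$, $\lambda(G\cap B_{j})=\int_{E_{j}}|g_{j}|^{2}e^{-2|w|^{2}}dm$, and $m(E_{j})\ge\delta\,m(B(0,\rho))$; since $e^{-2|w|^{2}}\asymp 1$ on $B(0,6\rho)$ with constants depending only on $\rho$, the Key Lemma yields $\lambda(B_{j})\le C\,\lambda(B(c_{j},6\rho))^{1-\theta}\lambda(G\cap B_{j})^{\theta}$. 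Summing over $j$, applying Hölder with exponents $\tfrac1{1-\theta},\tfrac1\theta$ and using bounded overlap,
\[
\|f\|^{2}\le\sum_{j}\lambda(B_{j})\le C\Big(\sum_{j}\lambda(B(c_{j},6\rho))\Big)^{1-\theta}\Big(\sum_{j}\lambda(G\cap B_{j})\Big)^{\theta}\le C\big(N_{0}\|f\|^{2}\big)^{1-\theta}\Big(N_{0}\!\int_{G}|f|^{2}e^{-2|z|^{2}}dm\Big)^{\theta},
\]
and dividing by $\|f\|^{2(1-\theta)}$ gives (1) with constant $(CN_{0})^{1/\theta}$. The one genuine obstacle is the Key Lemma: the naive local inequality $\int_{B(0,\rho)}|g|^{2}\le C\int_{E}|g|^{2}$ is \emph{false} --- e.g. $g(z)=\big((z_{1}-\rho)/c\big)^{N}$ puts almost all its mass near $z_{1}=-\rho$, which $E$ may avoid --- so one is forced into the interpolation form with a strictly larger ball and an exponent $\theta<1$; establishing that form and transporting it to each ball of the covering by the Weyl translations (so that the weight acts as a constant) are the two steps that need care, the remaining covering / Hölder bookkeeping being routine.
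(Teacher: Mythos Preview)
Your proof is correct, but it follows a genuinely different route from the one the paper has in mind. The paper does not spell out a proof of the Fock-space statement; it only says that ``using similar arguments to those used in \cite{Lue1}'' one obtains it, and then carries out that Luecking-type argument in detail for the line-bundle theorem (Theorem~\ref{TeoLogSe}). That argument runs as follows: for $(2)\Rightarrow(1)$ one splits $\C^{n}$ into a ``bad'' set $\mathcal{A}=\{a:|f(a)|^{2}<\varepsilon\,\mathrm{avg}_{B(a,R)}|f|^{2}e^{-2|\cdot|^{2}}\}$ and its complement $\mathcal{F}$, shows by Fubini that $\int_{\mathcal{A}}$ is negligible, and then proves the pointwise bound $|f(w)|^{2}\lesssim\mathrm{avg}_{G\cap B(w,R)}|f|^{2}e^{-2|\cdot|^{2}}$ for $w\in\mathcal{F}$ \emph{by contradiction}: one extracts (after the same Weyl translation you use) a normal-family limit $\tilde G$, a weak-$\ast$ limit $\tau$ of $\chi_{H_{q}}\,m$, and reaches a contradiction via Frostman's lemma because $\supp\tau\subset\{\tilde G=0\}$ would force $\dim_{\mathrm{Haus}}(\supp\tau)\le 2n-2$ while $\tau(B(x,r))\lesssim r^{2n}$ forces it to be $\ge 2n$.

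Your approach replaces this compactness/contradiction step by a direct, quantitative \emph{Remez-type} inequality obtained from the super-mean-value property of $-\log|g|$, and then globalises with a covering and H\"older. What you gain is an explicit constant: your argument produces $C=(C'N_{0})^{1/\theta}$ with $\theta=\delta\,2^{-2n}$, i.e.\ a bound of the form $\exp(c\,2^{2n}/\delta)$, whereas the normal-families route is inherently non-constructive. What the paper's route buys is robustness: it transplants with almost no change to the line-bundle setting of Theorem~\ref{TeoLogSe}, where there is no global group of isometries like your Weyl operators $W_{a}$ and one must localise via charts anyway; the contradiction argument only needs a local model (the Fock space appearing as the scaling limit), not an exact translation symmetry. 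For the flat Fock space both approaches are valid and your $(1)\Rightarrow(2)$ via the normalised kernels $k_{a}$ is essentially the same as the paper's use of the peak sections of Lemma~\ref{L1}.
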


Our main goal will be to prove the Logvinenko-Sereda theorem for the space of global holomorphic sections to a $k$-power of a positive line bundle on a compact complex manifold $X$. Before giving the statement of the theorem, we will present the convenient definitions.

First of all, we  consider a compact complex manifold $X$ which will be endowed with a smooth Hermitian metric $\omega$. As we know, this metric induces a distance function $d(x,y)$ on $X$, which will be used to define the balls, and a volume form $V$, which will be used to integrate over the manifold.

We assume that the holomorphic line bundle $L$ on a compact complex manifold $X$ is endowed with a smooth hermitian metric $\phi$. 

We will denote by $H^{0}(L)$ the space of global holomorphic sections to $L$. Moreover, we will consider that the line bundle $(L,\phi)$ is positive. 

As $\phi$ is a Hermitian metric on $L$, then $\partial\overline{\partial}\phi$ is a globally defined (1,1)-form on $X$, which is called the curvature form of the metric $\phi$. Moreover, the line bundle $L$ with the metric $\phi$ is called positive if $i\partial\overline{\partial}\phi$  is a positive form. 
The statement of the Logvinenko-Sereda theorem is the following:

\begin{theorem}For a sequence of measurable subsets $\{G_{k}\}$ in $X$ the following are equivalent:
	\begin{enumerate}
		\item[(1)] There is a constant $C>0$ such that for all $k\in\N$,
		\begin{equation*}
		\int_{X} |s(x)|^{2}\ dV(x)\leq C\int_{G_{k}} |s(x)|^{2}\ dV(x)
		\end{equation*}
		for every $s\in H^{0}(L^{k})$.  We will say that $G_k$ is a \emph{norming set in $H^{0}(L^{k})$.}
		\item[(2)] There is a constant $\delta>0$ and a radius $R$ such that 
		\begin{align*}
		V\left(G_{k}\cap B\left(a,\frac{R}{\sqrt{k}}\right)\right)>\delta V\left(B\left(a,\frac{R}{\sqrt{k}}\right)\right)
		\end{align*}
		for all $a\in X$ and $k\in\N$. We will say that $G_k$ is \emph{relatively dense in $X$}.
	\end{enumerate}
\end{theorem}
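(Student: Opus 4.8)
The plan is to prove the two implications separately, using the local analysis of sections of $L^k$ near a point, which on the scale $R/\sqrt k$ behaves like the classical Fock space $\mathcal F^2_{2|z|^2}(\C^n)$; the global compactness of $X$ then lets us pass between the local Fock model and the global estimates uniformly in $k$.

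\textbf{Setup and local model.} First I would fix a point $a\in X$ and a local holomorphic frame $e_L$ for $L$ near $a$, writing $\phi = -\log|e_L|^2_\phi$, and choosing local coordinates centered at $a$ in which $\phi(z) = \phi(a) + \operatorname{Re}(\text{quadratic}) + |z|^2_H + o(|z|^2)$, where $H$ is the positive-definite Hessian determined by the curvature $i\partial\bar\partial\phi(a)$. After rescaling $z \mapsto z/\sqrt k$, a section $s\in H^0(L^k)$ of norm-density $|s(x)|^2 = |f(z)|^2 e^{-k\phi(z)}$ becomes, up to a harmless holomorphic factor that can be absorbed, a holomorphic function times $e^{-2|w|^2}$ on a ball of fixed radius $R$ in $\C^n$; the curvature positivity guarantees uniform ellipticity, so all constants are controlled uniformly over $a\in X$ by compactness. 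The volume form $dV$ is, after rescaling, comparable uniformly to Lebesgue measure on $B(0,R)$. This reduction is the technical heart and the step I expect to be the main obstacle: one must make the "up to a holomorphic factor" and the $o(|z|^2)$ error terms genuinely uniform in $k$ and $a$, and control the fact that a global section is only approximately holomorphic-times-Gaussian after localization — this is typically handled with Hörmander $\bar\partial$-estimates (or peak sections / the Ohsawa–Takegoshi extension theorem) to build, for each point, a section concentrated near $a$ on scale $R/\sqrt k$ with controlled global norm.

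\textbf{(2) $\Rightarrow$ (1).} Assuming relative density, I would cover $X$ by balls $B(a_j, R/\sqrt k)$ with bounded overlap (possible with a constant independent of $k$ since $X$ is compact and the metric fixed), so that $\sum_j \int_{B(a_j,R/\sqrt k)} |s|^2\,dV \leq N \int_X |s|^2\,dV$ for a fixed $N$. On each ball, after the rescaling above, the hypothesis $V(G_k\cap B(a_j,R/\sqrt k)) > \delta V(B(a_j,R/\sqrt k))$ becomes relative density of the rescaled set in a fixed ball of $\C^n$, and the local Fock-space estimate (the quantitative Logvinenko–Sereda inequality on a ball for the weight $e^{-2|w|^2}$, which follows from Theorem on the classical Fock space together with a standard reverse-Hölder / Remez-type argument, or directly from a local version) gives $\int_{B(a_j, 2R/\sqrt k)} |s|^2\,dV \leq C' \int_{G_k\cap B(a_j,R/\sqrt k)} |s|^2\,dV$. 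Summing over $j$ and using bounded overlap on both sides yields $\|s\|^2 \leq C \int_{G_k} |s|^2\,dV$ with $C$ independent of $k$.

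\textbf{(1) $\Rightarrow$ (2).} For the converse I would argue by contradiction: if (2) fails, then for every $\delta,R$ there are $k$, a point $a$, with $V(G_k\cap B(a,R/\sqrt k)) \leq \delta V(B(a,R/\sqrt k))$. Using the peak sections constructed via Ohsawa–Takegoshi (a section $s_{a,k}\in H^0(L^k)$ with $\|s_{a,k}\|^2 \asymp 1$ and with most of its mass in $B(a, C/\sqrt k)$, decaying Gaussian-fashion outside), I would show $\int_{G_k}|s_{a,k}|^2\,dV$ can be made an arbitrarily small fraction of $\|s_{a,k}\|^2$: the mass outside $B(a, R/\sqrt k)$ is small if $R$ is large (uniform Gaussian tail from the rescaled model), and the mass of $s_{a,k}$ inside $G_k \cap B(a,R/\sqrt k)$ is small because there $|s_{a,k}|^2$ is bounded by a constant times $\|s_{a,k}\|^2 k^{n}/R^{\text{power}} \cdot$(rescaled Lebesgue density) while $G_k\cap B(a,R/\sqrt k)$ has small measure $\delta$-fraction — here one uses the sup bound $|s_{a,k}(x)|^2 \lesssim k^n \|s_{a,k}\|^2$ for sections, again uniform by compactness. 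Choosing first $R$ large, then $\delta$ small, violates the inequality in (1) for that $k$, contradicting the uniform constant $C$. The only delicate point is again uniformity in $a$ and $k$ of the peak-section construction and the sup-estimate, which compactness of $X$ and fixedness of $\omega,\phi$ supply.
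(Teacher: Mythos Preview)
Your argument for $(1)\Rightarrow(2)$ is essentially the paper's: build a peak section concentrated in $B(a,R/\sqrt{k})$ (the paper uses the normalized Bergman kernel $\Pi_k(\cdot,a)/\sqrt{|\Pi_k(a,a)|}$ rather than Ohsawa--Takegoshi, but either works), use the Gaussian tail to make the mass outside the ball small, and use the sup bound $|s|^2\lesssim k^n$ to convert the integral over $G_k\cap B$ into a volume lower bound.

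For $(2)\Rightarrow(1)$, however, your covering argument has a genuine gap. The local inequality you invoke,
\[
\int_{B(a_j,\,2R/\sqrt{k})} |s|^2\,dV \;\le\; C' \int_{G_k\cap B(a_j,\,R/\sqrt{k})} |s|^2\,dV,
\]
is \emph{false} for global sections, even with $C'=C'(R,\delta)$. In the rescaled Fock picture take $E=B(0,R/2)$ (so $\delta=1/4$) and $f(z)=z^N$; once $N>2R^2$ the density $|z|^{2N}e^{-2|z|^2}$ is increasing on $[0,R]$, and the ratio of the two integrals is $\gtrsim 2^{2N}$. Such $f$ arise as rescalings of sections of $L^k$ vanishing to order $N$ at $a_j$, available for $k$ large. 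No Remez or reverse-H\"older argument rescues this: those bounds for holomorphic functions always carry a constant depending on a degree parameter, which here is uncontrolled.

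The paper circumvents this by \emph{not} attempting a purely local estimate. It first discards the bad set
\[
\mathcal{A}=\Bigl\{a:\ |s(a)|^2<\tfrac{\varepsilon}{V(B(a,R/\sqrt{k}))}\textstyle\int_{B(a,R/\sqrt{k})}|s|^2\,dV\Bigr\},
\]
showing by Fubini that $\int_{\mathcal{A}}|s|^2\le \varepsilon\int_X|s|^2$, and then proves the pointwise bound $|s(w)|^2\lesssim V(B)^{-1}\int_{G_k\cap B}|s|^2$ \emph{only for} $w\in\mathcal{F}=X\setminus\mathcal{A}$. That step is by contradiction and normal families: after rescaling to the local model, the hypothesis $w\in\mathcal{F}$ pins the limit function to be nonzero at the origin, while relative density of $G_k$ together with the assumed failure forces it to vanish on a set of positive $2n$-dimensional measure. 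The restriction to $\mathcal{F}$ is precisely what kills the $z^N$-type counterexamples above; without it, your summation-over-balls scheme cannot close.
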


\begin{remark}[Bergman kernel]
	The space $H^{0}(L)$ admits a Hilbert space structure endowed with the scalar product
	\begin{equation*}
	\langle u,v\rangle=\int_{X} \langle u(x),v(x)\rangle,\quad u,v\in H^{0}(L)
	\end{equation*}
	where the integration is taken with respect to the volume form $V$.
	
	The Bergman kernel $\Pi(x,y)$ associated to this space is a section to the line bundle $L\boxtimes \overline{L}$ over the manifold $X\times X$, defined by 
	\begin{align*}
	\Pi(x,y)=\sum_{j=1}^{N} s_{j}(x)\otimes \overline{s_{j}(y)}
	\end{align*}
	where $s_{1}, \dots, s_{N}$ is an orthonormal basis for $H^{0}(L)$. Moreover, this definition does not depend on the choice of te orthonormal basis. 
	Notice that $L\boxtimes \overline{L}=\pi^{\ast}_{1}(L)\otimes\pi^{\ast}_{2}(\overline{L})$ where $\pi_{i}=X\times X\rightarrow X$ is the projection onto the $i$-factor.
	
	The Bergman kernel $\Pi(x,y)$ is in a sense the reproducing kernel for the space $H^{0}(L)$, satisfying the reproducing formula 
	\begin{align*}
	s(x)=\int_{X}\langle s(y), \Pi(x,y)\rangle\ dV(y)
	\end{align*}
	for $s(y)\in H^{0}(L)$. 
	
	The pointwise norm of the Bergman kernel in symmetric, $|\Pi(x,y)|=|\Pi(y,x)|$. Moreover, it satisfies 
	\begin{align*}
	|\Pi(x,x)|=\int_{X} |\Pi(x,y)|^{2}\ dV(y).
	\end{align*}
	
	\begin{lemma}\label{lemm}
		Let $(L,\phi)$ be a positive line bundle. We have the off-diagonal estimate
		\begin{align*}
		|\Pi_{k}(x,y)|\lesssim k^{n} e^{-c\sqrt{k}\ d(x,y)}
		\end{align*}
		where $c$ is an appropriate positive constant and the diagonal estimate
		\begin{align*}
		|\Pi_{k}(x,x)|\asymp k^{n}.
		\end{align*}
		Moreover, from this we obtain the estimate of the dimension of $H^{0}(L^k)$
		\begin{align*}
		\dim H^{0}(L^{k})\asymp k^{n}.
		\end{align*}
	\end{lemma}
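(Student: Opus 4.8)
The plan is to reduce everything to the Euclidean (Bargmann--Fock) model at the natural length scale $1/\sqrt k$, to read off the diagonal asymptotics from a sub-mean value inequality together with H\"ormander's $L^2$ estimate, to get the dimension count by integrating the diagonal, and to derive the off-diagonal decay from an Agmon-type weighted estimate. Since $X$ is compact and $i\partial\bar\partial\phi>0$, one has uniform bounds $c_1\omega\le i\partial\bar\partial\phi\le c_2\omega$ with $0<c_1\le c_2$, so the curvature of $L^k$ is $\asymp k\omega$. Fixing $x_0\in X$, I would choose (uniformly in $x_0$, by compactness) holomorphic coordinates $z$ centered at $x_0$ on a ball of fixed radius and a holomorphic frame $e$ of $L$ so that the weight $\phi(z)=-\log|e(z)|^2_\phi$ has normal form $\phi(z)=|z|^2+O(|z|^3)$; there $V$ and $d$ are uniformly comparable to the Euclidean ones, a section of $L^k$ reads $s=f\,e^{\otimes k}$ with $f$ holomorphic and $|s(z)|^2=|f(z)|^2e^{-k\phi(z)}$, and the dilation $z\mapsto z/\sqrt k$ turns the weight into $|z|^2+O(k^{-1/2})$ on a fixed ball --- this is the origin of the scale $1/\sqrt k$.

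For the diagonal, the upper bound comes from $|\Pi_k(x_0,x_0)|=\sup\{|s(x_0)|^2:\|s\|=1\}$: in the frame $e$ one has $|s(x_0)|^2=|f(0)|^2$, and the sub-mean value inequality for the plurisubharmonic $|f|^2$ over $B(0,R/\sqrt k)$ --- on which $e^{-k\phi}$ is pinched between positive constants --- gives $|f(0)|^2\lesssim V(B(0,R/\sqrt k))^{-1}\int_{B(0,R/\sqrt k)}|f|^2e^{-k\phi}\,dV\lesssim k^{n}\|s\|^2$, hence $|\Pi_k(x_0,x_0)|\lesssim k^n$. For the lower bound I would build an approximate peak section: with $\chi$ a cut-off that is $1$ near $0$ and supported in the coordinate ball, put $s_0=\chi\,e^{\otimes k}$, so $|s_0(x_0)|^2=1$, $\|s_0\|^2\asymp k^{-n}$, while $\bar\partial s_0$ lives where $e^{-k\phi}$ is exponentially small, so $\|\bar\partial s_0\|\le e^{-\varepsilon k}$; solving $\bar\partial u=\bar\partial s_0$ by H\"ormander's estimate for the weight $k\phi+2n\log|z|$ (admissible since $i\partial\bar\partial(k\phi)\asymp k\omega$ absorbs the extra term for $k$ large) yields $u(x_0)=0$ and $\|u\|\le e^{-\varepsilon' k}$, so $s:=s_0-u\in H^0(L^k)$ has $s(x_0)=s_0(x_0)$ and $\|s\|^2\asymp k^{-n}$, giving $|\Pi_k(x_0,x_0)|\ge|s(x_0)|^2/\|s\|^2\gtrsim k^n$. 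This proves $|\Pi_k(x,x)|\asymp k^n$ uniformly, and since $\Pi_k(x,x)=\sum_j|s_j(x)|^2$ for an orthonormal basis $\{s_j\}$, integrating gives $\dim H^0(L^k)=\int_X\Pi_k(x,x)\,dV\asymp k^n V(X)\asymp k^n$.

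For the off-diagonal estimate, the same sub-mean value inequality applied in the $y$-variable to $y\mapsto\Pi_k(x,y)$ gives $|\Pi_k(x,y)|^2\lesssim k^n\int_{B(y,R/\sqrt k)}|\Pi_k(x,z)|^2\,dV(z)$, and $\int_X|\Pi_k(x,z)|^2\,dV(z)=|\Pi_k(x,x)|\asymp k^n$. When $d(x,y)$ is at most a fixed multiple of $1/\sqrt k$, Cauchy--Schwarz $|\Pi_k(x,y)|\le|\Pi_k(x,x)|^{1/2}|\Pi_k(y,y)|^{1/2}\lesssim k^n$ already gives the bound, since $e^{-c\sqrt k\,d(x,y)}$ is then bounded below. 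In the remaining range I would run an Agmon-type argument: fix $x$, let $\rho$ be a regularization of $d(x,\cdot)$ with $|\bar\partial\rho|\le1$, pick $\lambda>0$ small, and feed $\bar\partial\big(e^{\lambda\sqrt k\rho}\Pi_k(x,\cdot)\big)=\lambda\sqrt k(\bar\partial\rho)e^{\lambda\sqrt k\rho}\Pi_k(x,\cdot)$ into the Bochner--Kodaira--H\"ormander inequality; since the curvature of $L^k$ is $\asymp k\omega$ it dominates the defect $\lambda^2k|\bar\partial\rho|^2\le\lambda^2k$ once $\lambda<\sqrt{c_1}$, so $\int_{\{\rho\ge A/\sqrt k\}}|\Pi_k(x,z)|^2e^{2\lambda\sqrt k\rho(z)}\,dV(z)\lesssim e^{2\lambda A}\int_{\{\rho\le A/\sqrt k\}}|\Pi_k(x,z)|^2\,dV(z)\lesssim k^n$. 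Since $\rho\ge d(x,y)-R/\sqrt k$ on $B(y,R/\sqrt k)$, this forces $\int_{B(y,R/\sqrt k)}|\Pi_k(x,z)|^2\,dV(z)\lesssim k^n e^{-2c\sqrt k\,d(x,y)}$ (with $c$ slightly smaller than $\lambda$, to absorb the oscillation of $\rho$), and combining with the sub-mean value bound gives $|\Pi_k(x,y)|^2\lesssim k^{2n}e^{-2c\sqrt k\,d(x,y)}$, i.e. $|\Pi_k(x,y)|\lesssim k^n e^{-c\sqrt k\,d(x,y)}$.

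The routine part is the localization and the diagonal asymptotics (standard peak-section/H\"ormander technology). The hard part will be making the Agmon estimate rigorous: $d(x,\cdot)$ is only Lipschitz and must be regularized while keeping $|\bar\partial\rho|\le1$ and without creating Hessian terms beyond what the $O(k)$ positivity can absorb; the cut-off near $\rho=0$ must be arranged so the right-hand side stays $O(k^n)$; and the lower-order error terms in the Bochner--Kodaira inequality (torsion of $\omega$, the $O(|z|^3)$ part of $\phi$) must be controlled --- they are $O(\sqrt k)$ and are swallowed by the $O(k)$ gap for $\lambda$ small and $k$ large. An alternative that sidesteps these issues is to deduce the decay from the spectral gap of the Kodaira Laplacian $\Box_k$ on the orthogonal complement of $H^0(L^k)$ --- of size $\asymp k$ by the Bochner--Kodaira--Nakano identity --- together with finite propagation speed for $\cos(t\sqrt{\Box_k})$, which localizes $\Pi_k(x,y)$ to distance $\sim t$ and turns the $O(k)$ gap into the factor $e^{-c\sqrt k\,d(x,y)}$, as in the Ma--Marinescu approach.
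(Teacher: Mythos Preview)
The paper does not actually give its own proof of this lemma: immediately after the statement it simply writes ``See \cite{BoB} where this lemma is proved using a method that appears in \cite{LIND}.'' So there is nothing to compare your argument against line by line; you have supplied a proof sketch where the paper only supplies a citation.

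That said, your outline is correct and is in fact close in spirit to the methods in the references the paper cites. The diagonal asymptotics via a sub-mean value inequality for the upper bound and a H\"ormander--Tian peak section for the lower bound is the standard route, and integrating $|\Pi_k(x,x)|$ over $X$ to get $\dim H^0(L^k)\asymp k^n$ is exactly right. For the off-diagonal decay, the Agmon-type weighted $L^2$ estimate you describe is precisely the mechanism behind the Lindholm approach (originally for Fock-type spaces, adapted to the compact setting by Berndtsson and others), and your alternative via the spectral gap of $\Box_k$ and finite propagation speed is the Ma--Marinescu route; either is acceptable. The caveats you flag --- regularizing $d(x,\cdot)$ while keeping $|\bar\partial\rho|\le 1$, controlling the torsion and $O(|z|^3)$ terms, and arranging the cut-off so the right-hand side stays $O(k^n)$ --- are real but standard, and you have identified them correctly. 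In short: your proposal goes well beyond what the paper itself does here, and the approach is sound.
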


		See \cite{BoB} where this lemma is proved using a method that appears in \cite{LIND}.

\end{remark}

\section{Previous result}

The main goal of the lemma is to prove for each ball the existence of a normalized peak-section, that is, a section such that most of the mass of the function is in such ball. For this we will use the reproducing kernel of $H^{0}(L^{k})$.

\begin{lemma}\label{L1}
	Given $\varepsilon>0$, there is a radius $R>0$ such that for all ball $B\left(\xi,\frac{R}{\sqrt{k}}\right)$, there is a section $s=s_{\xi,k}\in H^{0}(L^{k})$ such that
	\begin{itemize}
		\item $\|s\|^{2}={\int_{X} |s(x)|^{2}\ dV(x)=1}$,\newline
		
		\item ${\int_{X\setminus B\left(\xi,\frac{R}{\sqrt{k}}\right)} |s(x)|^{2}\ dV(x)<\varepsilon}$,\newline
	\end{itemize} 
\end{lemma}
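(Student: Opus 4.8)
The natural candidate is the normalized Bergman kernel itself, so the plan is to set
\[
s_{\xi,k}(x) = \frac{\Pi_k(x,\xi)}{\sqrt{|\Pi_k(\xi,\xi)|}},
\]
which is a global holomorphic section of $L^k$ (for fixed $\xi$ the kernel $\Pi_k(\cdot,\xi)$ lies in $H^0(L^k)$). The first bullet is then immediate from the identity $|\Pi_k(\xi,\xi)| = \int_X |\Pi_k(\xi,y)|^2\,dV(y)$ recalled in the Bergman-kernel remark: it gives $\|s_{\xi,k}\|^2 = |\Pi_k(\xi,\xi)|^{-1}\int_X |\Pi_k(x,\xi)|^2\,dV(x) = 1$. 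So the whole content is the tail estimate in the second bullet.

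For the tail, I would estimate
\[
\int_{X\setminus B(\xi,R/\sqrt{k})} |s_{\xi,k}(x)|^2\,dV(x)
= \frac{1}{|\Pi_k(\xi,\xi)|}\int_{X\setminus B(\xi,R/\sqrt{k})} |\Pi_k(x,\xi)|^2\,dV(x),
\]
and then feed in the two estimates from Lemma~\ref{lemm}. The diagonal estimate gives $|\Pi_k(\xi,\xi)|^{-1}\lesssim k^{-n}$, uniformly in $\xi$. The off-diagonal estimate gives $|\Pi_k(x,\xi)|^2 \lesssim k^{2n} e^{-2c\sqrt{k}\,d(x,\xi)}$. Hence the tail is $\lesssim k^n \int_{X\setminus B(\xi,R/\sqrt{k})} e^{-2c\sqrt{k}\,d(x,\xi)}\,dV(x)$. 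I would then split the complement of the ball into dyadic annuli $A_j = \{x : 2^{j}R/\sqrt k \le d(x,\xi) < 2^{j+1}R/\sqrt k\}$ for $j\ge 0$; since $X$ is a compact manifold of complex dimension $n$, each annulus has volume $V(A_j)\lesssim (2^{j}R/\sqrt k)^{2n} = k^{-n}(2^j R)^{2n}$ (with a uniform constant, for $2^jR/\sqrt k$ below the diameter of $X$, and trivially beyond). On $A_j$ the exponential is at most $e^{-2c\sqrt k \cdot 2^j R/\sqrt k} = e^{-2c\,2^j R}$, so the $j$-th term contributes $\lesssim k^n \cdot k^{-n}(2^j R)^{2n} e^{-2c\,2^jR} = (2^jR)^{2n}e^{-2c\,2^jR}$. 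Summing the geometric-type series over $j\ge 0$ yields a bound $\lesssim \sum_{j\ge 0}(2^jR)^{2n}e^{-2c\,2^jR}$, a quantity that tends to $0$ as $R\to\infty$. Thus, given $\varepsilon>0$, one chooses $R$ large enough to make this sum less than $\varepsilon$, and this $R$ works simultaneously for every $\xi\in X$ and every $k\in\N$.

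The one point requiring care — and the main obstacle — is obtaining the volume bound $V(B(\xi,r)\setminus B(\xi,r/2)) \lesssim r^{2n}$ with a constant uniform in $\xi$ and valid for all radii up to $\mathrm{diam}(X)$; this is where compactness of $X$ and smoothness of the Hermitian metric $\omega$ enter (a Bishop–Gromov-type comparison, or simply a finite cover by coordinate charts in which $V$ is comparable to Lebesgue measure). Once that uniform doubling-type estimate is in hand, the dyadic summation is routine, and the uniformity in $k$ is automatic because every occurrence of $k$ cancels: the factors $k^n$ from $|\Pi_k(x,\xi)|^2/|\Pi_k(\xi,\xi)|$ are exactly absorbed by the $k^{-n}$ from the rescaled volumes, leaving a $k$-free tail bound. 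I would also remark that $\int_X e^{-2c\sqrt k\,d(x,\xi)}\,dV(x)$ could alternatively be handled by the coarea formula with the profile of $V(B(\xi,r))$, but the dyadic decomposition keeps all constants manifestly uniform.
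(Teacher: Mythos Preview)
Your proof is correct and follows essentially the same approach as the paper: both take the normalized Bergman kernel as the peak section, verify $\|s\|=1$ from the reproducing identity, and control the tail via the diagonal and off-diagonal estimates of Lemma~\ref{lemm}, with the $k^{n}$ factors cancelling against the rescaled volumes to leave a $k$-free bound that vanishes as $R\to\infty$. The only differences are cosmetic: the paper evaluates the tail integral by the layer-cake formula and a change of variable (obtaining $\int_R^\infty u^{2n}e^{-2cu}\,du \lesssim e^{-2cR}(1+2cR)^{2n}$) rather than your dyadic annuli, and it is slightly more careful in citing a lemma to extract a genuine section $\Phi_{k,\xi}\in H^{0}(L^{k})$ with $|\Phi_{k,\xi}(x)|=|\Pi_{k}(x,\xi)|$, since $\Pi_{k}(\cdot,\xi)$ is a priori $L^{k}\otimes\overline{L^{k}_{\xi}}$-valued.
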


\begin{proof}
	Given $\varepsilon>0$. We consider the reproducing kernel of the Hilbert space $H^{0}(L^k)$
	\begin{align*}
	\Pi_{k}(x,y)=\sum_{j=1}^{N} s_{j}(x)\otimes \overline{s_{j}(y)}
	\end{align*}
	where $s_{1},\dots, s_{N}$ is an orthonormal basis for $H^{0}(L^k)$.
	
	If we fix $y\in X$, there is a section $\Phi_{k,y}\in H^{0}(L^k)$ such that 
	$$|\Phi_{k,y}(x)|=|\Pi_{k}(x,y)|,\quad x\in X,$$
	by the lemma in \cite[pag. 431]{LevJoa}.
	
	We will consider the section 
	\begin{align*}
	g_{k,y}(x)=\frac{\Phi_{k,y}(x)}{\sqrt{|\Pi_{k}(y,y)|}},\quad x\in X,
	\end{align*}
	where $\Pi_{k}$ denotes the Bergman kernel for the $k$'th power $L^{k}$ of the line bundle $L$.

	Let us see the first property.
	\begin{align*}
	\int_{X} |g_{k,y}(x)|^2\ dV(x)=\frac{1}{|\Pi_{k}(y,y)|} \int_{X} |\Pi_{k}(x,y)|^{2}\ dV(x)=\frac{|\Pi_{k}(y,y)|}{|\Pi_{k}(y,y)|}=1.
	\end{align*}

Now, we check the second property
\begin{align*}
&\int_{X\setminus B\left(y,\frac{R}{\sqrt{k}}\right)} |g_{k,y}(x)|^{2}\ dV(x)=\frac{1}{|\Pi_{k}(y,y)|}\int_{X\setminus B\left(y,\frac{R}{\sqrt{k}}\right)} |\Pi_{k}(x,y)|^{2}\ dV(x)\\
&= \frac{1}{|\Pi_{k}(y,y)|} \int_{0}^{\infty} V\left(\left\{x: |\Pi_{k}(x,y)|>\lambda  \right\}\setminus B\left(y,\frac{R}{\sqrt{k}}\right)\right)2\lambda\ d\lambda\\
&\leq \frac{1}{|\Pi_{k}(y,y)|}\int_{0}^{k^n D}  V\left(\left\{x: De^{-c\sqrt{k}\ d(x,y)}\geq k^{-n}\lambda  \right\}\setminus B\left(y,\frac{R}{\sqrt{k}}\right)\right)2\lambda\ d\lambda.
\end{align*}
where we use that $|\Pi_{k}(x,y)|\leq k^{n} D e^{-c\sqrt{k}\ d(x,y)}$ for some constants $c,D>0$, as we see in Lemma~\ref{lemm}. 
Applying the change of variable $\lambda=k^{n} D e^{-c u}$.
\begin{align*}
&\frac{1}{|\Pi_{k}(y,y)|}\int_ {0}^{\infty} V\left(B\left(y,\frac{u}{\sqrt{k}}\right)\setminus B\left(y,\frac{R}{\sqrt{k}}\right)\right) (2k^{n}De^{-cu}) |-ck^{n} De^{-cu}|\ du\\
&\lesssim \frac{1}{k^{n}} \int_ {R}^{\infty}\left(\frac{u}{\sqrt{k}}\right)^{2n} (2k^{n}De^{-cu}) |-ck^{n} De^{-cu}|\ du\\
&\lesssim\int_{R}^{\infty} u^{2n} e^{-2cu}\ du = e^{-2cR}\int_{0}^{\infty} (\mu+R)^{2n} e^{-2c\mu}\ d\mu\\
& = e^{-2cR} \sum_{j=0}^{2n} \binom{2n}{k} R^{2n-j} \int_{0}^{\infty} \mu^{j} e^{-2c\mu}\ d\mu\\
&\lesssim e^{-2cR} \sum_{j=0}^{2n} \binom{2n}{k}\frac{R^{2n-j}}{(2c)^{j}}  j!\lesssim e^{-2cR} \sum_{j=0}^{2n} \binom{2n}{k}(2cR)^{2n-j}=e^{-2cR}(1+2cR)^{2n}.
\end{align*}

Finally, for $R$ large enough we obtain that 
\begin{align*}
\int_{X\setminus B\left(y,\frac{R}{\sqrt{k}}\right)} |g_{k,y}(x)|^{2}\ dV(x)&<\varepsilon.
\end{align*}

\end{proof}

\begin{remark}\label{remark22}
Notice that taking the same section of the previous lemma and	using the Cauchy-Schwarz inequality we obtain that
	\begin{align*}
	|s(x)|^{2}=|\langle s(y),\Pi_{k}(x,y)\rangle|^{2}\leq \|s\|^{2} |\Pi(x,x)|\lesssim k^{n}.
	\end{align*}
\end{remark}

\section{Main results}

\begin{theorem} \label{TeoLogSe}
	For a sequence of measurable subset $\{G_{k}\}_{k}$ in $X$ the following are equivalent:
	\begin{enumerate}
		\item[(1)] There is a constant $C>0$ such that for all $k\in\N$,
		\begin{align*}
		\int_{X}\ |s(x)|^{2}\ dV(x) \leq C \int_{G_{k}} |s(x)|^{2}\ dV(x)
		\end{align*}
		for every $s\in H^{0}(L^k)$. We will say that $G_k$ is a \emph{norming set of $H^{0}(L^{k})$}.
		\item[(2)] There is a constant $\delta>0$ and a radius $R$ such that 
\begin{align*}
		V\left(G_{k}\cap B\left(a,\frac{R}{\sqrt{k}}\right)\right)>\delta V\left(B\left(a,\frac{R}{\sqrt{k}}\right)\right)
\end{align*}
for all $a\in X$ and $k\in\N$. We will say that $G_k$ is \emph{relatively dense in X}.
	\end{enumerate}
\end{theorem}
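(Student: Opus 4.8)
The plan is to prove the two implications separately.

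For the direction $(1) \Rightarrow (2)$, I would argue by contraposition. Suppose $(2)$ fails: then for every $\delta > 0$ and every radius $R$ there exist $k$ and a point $a = a_{k} \in X$ with $V(G_{k} \cap B(a, R/\sqrt{k})) \le \delta V(B(a, R/\sqrt{k}))$. The idea is to test $(1)$ against the normalized peak section $s = s_{a,k}$ furnished by Lemma \ref{L1}. Fix a small $\varepsilon > 0$ and choose the radius $R$ from Lemma \ref{L1} so that $\|s\|^{2} = 1$ while $\int_{X \setminus B(a, R/\sqrt{k})} |s|^{2}\, dV < \varepsilon$. Then
\begin{align*}
\int_{G_{k}} |s|^{2}\, dV
\le \int_{G_{k} \cap B(a, R/\sqrt{k})} |s|^{2}\, dV + \int_{X \setminus B(a, R/\sqrt{k})} |s|^{2}\, dV
\le \|s\|_{\infty}^{2}\, V\!\left(G_{k} \cap B\!\left(a, \tfrac{R}{\sqrt{k}}\right)\right) + \varepsilon.
\end{align*}
By Remark \ref{remark22} we have $\|s\|_{\infty}^{2} \lesssim k^{n}$, and since $V(B(a, R/\sqrt{k})) \asymp (R/\sqrt{k})^{2n} = R^{2n} k^{-n}$, the first term is $\lesssim R^{2n} \delta$. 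Choosing first $\varepsilon$ small, then $\delta$ small (which is allowed since $(2)$ is assumed to fail for \emph{every} $\delta$, with $R$ then determined), we make $\int_{G_{k}} |s|^{2}\, dV$ as small as we like while $\int_{X} |s|^{2}\, dV = 1$, contradicting $(1)$. Some care is needed to handle the logical quantifiers — one fixes $\varepsilon$, gets $R$ from Lemma \ref{L1}, then uses the negation of $(2)$ with that $R$ and a $\delta$ chosen after seeing the implied constants — but this is routine bookkeeping.

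For the direction $(2) \Rightarrow (1)$, fix $s \in H^{0}(L^{k})$ with $\|s\| = 1$. The strategy is a standard covering/local-estimate argument. Using the diagonal and off-diagonal estimates of Lemma \ref{lemm} together with the reproducing formula, one first establishes a local reverse-type bound: there are constants $R_{0} > 0$ and $A > 0$, independent of $k$, such that for every $a \in X$,
\begin{align*}
\sup_{x \in B(a, R_{0}/\sqrt{k})} |s(x)|^{2} \le A\, k^{n} \int_{B(a, 2R_{0}/\sqrt{k})} |s(x)|^{2}\, dV(x).
\end{align*}
This is proved by writing $s(x) = \int_{X} \langle s(y), \Pi_{k}(x,y)\rangle\, dV(y)$, splitting the integral into the ball $B(a, 2R_{0}/\sqrt{k})$ and its complement, bounding the near part by Cauchy–Schwarz and the tail by the off-diagonal decay $|\Pi_{k}(x,y)| \lesssim k^{n} e^{-c\sqrt{k} d(x,y)}$ (the tail contributes $\lesssim \varepsilon \|s\|^{2} = \varepsilon$ once $R_{0}$ is large, and can be absorbed), and using $|\Pi_{k}(x,x)| \asymp k^{n}$. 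Next, on a ball $B(a, R_{0}/\sqrt{k})$ one combines this sup bound with the relative density hypothesis:
\begin{align*}
\int_{B(a, R_{0}/\sqrt{k})} |s|^{2}\, dV
\le \left(\sup_{B(a, R_{0}/\sqrt{k})} |s|^{2}\right) V\!\left(B\!\left(a, \tfrac{R_{0}}{\sqrt{k}}\right)\right)
\lesssim \frac{1}{\delta}\left(\sup_{B} |s|^{2}\right) V\!\left(G_{k} \cap B\!\left(a, \tfrac{R_{0}}{\sqrt{k}}\right)\right)
\lesssim \frac{1}{\delta} \int_{G_{k} \cap B(a, 2R_{0}/\sqrt{k})} \cdots,
\end{align*}
and here one must iterate once more the local sup estimate to pull the supremum back under an integral over $G_{k}$; since on $G_{k} \cap B$ the integrand is comparable to its sup on a slightly smaller ball, one gets $\int_{B(a, R_{0}/\sqrt{k})} |s|^{2} \lesssim \delta^{-1} \int_{G_{k} \cap B(a, CR_{0}/\sqrt{k})} |s|^{2}\, dV$ up to a tail term controlled by the off-diagonal estimate. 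Finally, choose a cover of $X$ by balls $B(a_{i}, R_{0}/\sqrt{k})$ with bounded overlap — the number of balls grows like $k^{n}$ but the overlap constant is uniform in $k$ because the metric $\omega$ is smooth on the compact manifold $X$ — and sum:
\begin{align*}
\int_{X} |s|^{2}\, dV \le \sum_{i} \int_{B(a_{i}, R_{0}/\sqrt{k})} |s|^{2}\, dV \lesssim \frac{1}{\delta} \sum_{i} \int_{G_{k} \cap B(a_{i}, CR_{0}/\sqrt{k})} |s|^{2}\, dV \lesssim \frac{1}{\delta} \int_{G_{k}} |s|^{2}\, dV,
\end{align*}
with the last overlap again uniform in $k$. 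The tail terms generated along the way carry a factor $e^{-cR_{0}}$ or a small $\varepsilon$, so for $R_{0}$ large enough they are absorbed into the left-hand side, which yields $(1)$ with $C \asymp 1/\delta$.

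The main obstacle is the local sub-mean-value / reverse estimate and, more precisely, making every constant genuinely uniform in $k$ — this is exactly where the $1/\sqrt{k}$ scaling of the balls is essential, since it matches the natural length scale of the Bergman kernel $\Pi_{k}$ (its decay rate $\sqrt{k}$ and its on-diagonal size $k^{n}$). One has to be careful that the bounded-overlap constant of the covering, the implied constants in $|\Pi_{k}(x,x)| \asymp k^{n}$ and $|\Pi_{k}(x,y)| \lesssim k^{n} e^{-c\sqrt{k}d(x,y)}$, and the comparability $V(B(a,r)) \asymp r^{2n}$ for $r \lesssim 1/\sqrt{k}$, are all independent of $k$; compactness of $X$ and smoothness of $\omega$ give this, but it should be spelled out. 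The tail estimates are the technical heart of the argument, and they work for the same reason the tail estimate in Lemma \ref{L1} worked: integrating $k^{n} e^{-c\sqrt{k}d(x,y)}$ against the volume form over the complement of a $R/\sqrt{k}$-ball produces a bound of the form $e^{-cR}\mathrm{poly}(R)$, which is small for large $R$ uniformly in $k$.
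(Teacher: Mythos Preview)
Your argument for $(1)\Rightarrow(2)$ is essentially the paper's, rephrased as a contraposition; both test the inequality against the normalized peak section of Lemma~\ref{L1} and use the pointwise bound $|s|^{2}\lesssim k^{n}$ from Remark~\ref{remark22} together with $V(B(a,R/\sqrt{k}))\asymp R^{2n}k^{-n}$.

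The direction $(2)\Rightarrow(1)$, however, has a real gap. After reaching
\[
\int_{B}|s|^{2}\,dV \;\le\; \Bigl(\sup_{B}|s|^{2}\Bigr)\,V(B)\;\lesssim\;\frac{1}{\delta}\,\Bigl(\sup_{B}|s|^{2}\Bigr)\,V(G_{k}\cap B),
\]
you need $\bigl(\sup_{B}|s|^{2}\bigr)\,V(G_{k}\cap B)\lesssim \int_{G_{k}\cap B'}|s|^{2}\,dV$ for a slightly larger ball $B'$. The justification you give (``iterate once more the local sup estimate'') goes the wrong way: the sub-mean-value inequality bounds a point value \emph{above} by an average, whereas here you need $|s|^{2}$ on $G_{k}\cap B$ to be bounded \emph{below} by a fixed fraction of $\sup_{B}|s|^{2}$. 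No such bound holds in general: a holomorphic function on a ball can be arbitrarily small on a subset of any prescribed positive measure while its sup is attained elsewhere (think of $z^{N}$ on the unit disc with $G$ a small disc around the origin). So the covering argument collapses exactly at this step, and the tail absorption does not help because the difficulty is purely local. In effect, the inequality you are assuming at this point is a local version of the very statement you are trying to prove.

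The paper handles this by a different mechanism. It first splits $X$ into the ``bad'' set
\[
\mathcal{A}=\Bigl\{a\in X:\ |s(a)|^{2}<\frac{\varepsilon}{V(B(a,R/\sqrt{k}))}\int_{B(a,R/\sqrt{k})}|s|^{2}\,dV\Bigr\}
\]
and $\mathcal{F}=X\setminus\mathcal{A}$; a short Fubini argument (Lemma~\ref{l2}) shows $\int_{\mathcal{A}}|s|^{2}\le C\varepsilon\int_{X}|s|^{2}$, so it suffices to prove, for $w\in\mathcal{F}$,
\[
|s(w)|^{2}\;\le\;\frac{C}{V(B(w,R/\sqrt{k}))}\int_{G_{k}\cap B(w,R/\sqrt{k})}|s|^{2}\,dV.
\]
This is established by contradiction and normal families: one rescales a violating sequence to the unit ball of $\C^{n}$ (using positivity of $i\partial\overline{\partial}\phi$ to normalize the metric), extracts via Montel a holomorphic limit $\tilde{G}$ with $|\tilde{G}(0)|\gtrsim\varepsilon$, and via Helly a nontrivial weak-$\ast$ limit $\tau$ of $\chi_{H_{q}}m$ (relative density gives $\inf_{q}m(H_{q})>0$). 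The contradiction hypothesis forces $\supp\tau\subset\{\tilde{G}=0\}$, while the obvious bound $\tau(B(x,r))\lesssim r^{2n}$ and Frostman's lemma give $\dim_{Haus}(\supp\tau)\ge 2n$, which is impossible for the zero set of a nontrivial holomorphic function. This compactness step is the actual content of $(2)\Rightarrow(1)$ and is not recoverable from iterating sub-mean-value estimates.
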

\begin{proof}
	The proof that (1) implies (2) is the easiest. In this proof, we consider a section
	with the properties of Lemma~\ref{L1}.
	
	So, given $\varepsilon \leq 1/2C$ and applying the Lemma~\ref{L1} there is a radius $R$ such that for all balls $B\left(\xi, \frac{R}{\sqrt{k}}\right)$ there is a section $s$ verifying the properties of the lemma. Hence, applying the Remark~\ref{remark22} we obtain
	\begin{align*}
	\frac{V\left(G_{k}\cap B \left(\xi, \frac{R}{\sqrt{k}}\right)\right)}{V\left(B \left(\xi, \frac{R}{\sqrt{k}}\right)\right)}&\gtrsim \frac{1}{R^{2n}} \int_{G_{k}\cap B\left(\xi, \frac{R}{\sqrt{k}}\right)} |s^{\ast}(x)|^{2}\ dV(x)
	\end{align*}
	and using the other properties with (2) we have
	\begin{align*}
	\frac{V\left(G_{k}\cap B \left(\xi, \frac{R}{\sqrt{k}}\right)\right)}{V\left(B \left(\xi, \frac{R}{\sqrt{k}}\right)\right)}&\gtrsim\frac{1}{R^{2n}} \left(\int_{G} |s^{\ast}(x)|^{2}\ dV(x)-\int_{X\setminus B \left(\xi, \frac{R}{\sqrt{k}}\right) } |s^{\ast}(x)|^{2}\ dV(x) \right)\\
	&\geq \frac{1}{R^{2n}}\left(\frac{1}{C}-\varepsilon\right)\geq \frac{1}{2CR^{2n}}.
	\end{align*}
	
	Therefore, we have proved that (1) implies (2).\newline
	
The proof that (2) implies (1) is the difficult one. We will use a similar argument as in \cite[Chapter 4]{JM}. We will partition $X$ in two pieces. The first piece, denoted by $\mathcal{A}$, are the points where the sections is much smaller that its average and the second is the complementary $X\setminus\mathcal{A}$. The following lemma proves that the integral over $\mathcal{A}$ is irrelevant as most of the mass is carried by $X\setminus \mathcal{A}$.

\begin{lemma}\label{l2}
	Let $\varepsilon>0$ and $s\in H^{0}(L^{k})$. Define the set
	\begin{align*}
	\mathcal{A}=\left\{a\in X\ |\ |s(a)|^{2}<\frac{\varepsilon}{V(B(a,R/\sqrt{k}))}\int_{B(a,R/\sqrt{k})}|s(z)|^{2}\ dV(z)\right\}.
	\end{align*}
	Then there is a constant $C$ depending on $R$ and $k$ such that 
	\begin{align*}
	\int_{\mathcal{A}} |s(z)|^{2}\ dV(z)<C\varepsilon \int_{X} |s(z)|^{2}\ dV(z).
	\end{align*}
\end{lemma}

\begin{proof}
	
	For $\xi\in \mathcal{A}$ we have 
	\begin{align*}
	|s(a)|^{2}<\frac{\varepsilon}{V(B(a,R/\sqrt{k}))} \int_{B(a,R/\sqrt{k})}|s(z)|^{2}\ dV(z) =\varepsilon \int_{X} |s(z)|^{2}\frac{\chi_{B(a,R/\sqrt{k})}(z)}{V(B(a,R/\sqrt{k}))}\ dV(z).
	\end{align*}
	
	Integrating respect to $a$ and applying the Fubini's theorem
	\begin{align*}
	\int_{\mathcal{A}} |s(a)|^{2}\ dV(a)&<\varepsilon \int_{X}|s(z)|^{2}\left(\int_{\mathcal{A}}\frac{\chi_{B(a,R/\sqrt{k})}(z)}{V(B(a,R/\sqrt{k}))}\ dV(a) \right)\ dV(z)\\
	&\leq \frac{V(B(a,R/\sqrt{k}))}{V(B(a,R/\sqrt{k}))}\varepsilon \int_{X} |s(z)|^{2}\ dV(z)=\varepsilon \int_{X} |s(z)|^{2}\ dV(z).
	\end{align*}
	
	Therefore, we obtain
	\begin{align*}
\int_{\mathcal{A}} |s(a)|^{2}\ dV(a)\leq \varepsilon \int_{X} |s(z)|^{2}\ dV(z).
	\end{align*}
\end{proof}

Let $\mathcal{F}=X\setminus\mathcal{A}=\left\{a\in X\ :\ |s(a)|^{2}\geq \frac{\varepsilon}{V(B(a,R/\sqrt{k}))} \int_{B(a,R/\sqrt{k})} |s(z)|^{2}\ dV(z)\right\}$.
If we choose $\varepsilon$ such that $\varepsilon C<1/2$ we have 
\begin{align*}
\int_{X} |s(z)|^{2}\ dV(z)<2\int_{\mathcal{F}} |s(z)|^{2}\ dV(z)
\end{align*}
since 
\begin{align*}
\int_{X} |s(z)|^{2}\ dV(z)&=\int_{\mathcal{F}} |s(z)|^{2}\ dV(z)+\int_{\mathcal{A}} |s(z)|^{2}\ dV(z)\\
&<\int_{\mathcal{F}} |s(z)|^{2}\ dV(z)+C\varepsilon\int_{X} |s(z)|^{2}\ dV(z)\\
&<\int_{\mathcal{F}} |s(z)|^{2}\ dV(z)+\frac{1}{2}\int_{X} |s(z)|^{2}\ dV(z).
\end{align*}

Thus this is enough to show that
\begin{align*}
\int_{\mathcal{F}} |s(z)|^{2}\ dV(z)\lesssim \int_{G_{k}} |s(z)|^{2}\ dV(z).
\end{align*}

All we need to prove is the existence of a constant $C>0$ such that for all $w\in\mathcal{F}$
\begin{align}\label{ineq1}
|s(w)|^{2}\leq \frac{C}{V\left(B\left(w,\frac{R}{\sqrt{k}}\right)\right)} \int_{B\left(w,\frac{R}{\sqrt{k}}\right)\cap G_{k}} |s(z)|^{2}\ dV(z).
\end{align}

Indeed, if this is the case then
\begin{align*}
\int_{\mathcal{F}} |s(w)|^{2}\ dV(w)&\leq C\int_{G_{k}} |s(z)|^{2}\left(\int_{\mathcal{F}}\frac{\chi_{B\left(w,\frac{R}{\sqrt{k}}\right)}(z)}{V\left(B\left(w,\frac{R}{\sqrt{k}}\right)\right)}\ dV(w)\right)\ dV(z)\\
&\leq C\int_{G_{k}} |s(z)|^{2}\ dV(z).
\end{align*}

To prove \eqref{ineq1} we argue by contradiction. If \eqref{ineq1} is false there are for any $q\in\N$ sections $s_{q}\in H^{0}(L^{k_{q}})$ and $w_{q}\in\mathcal{F}$ such that
\begin{align}\label{ineq2}
|s_{q}(w_{q})|^{2}>\frac{q}{V\left(B\left(w_{q},\frac{R}{\sqrt{k_{q}}}\right)\right)}\int_{B(w_{q},\frac{R}{\sqrt{k_{q}}})\cap G_{k_{q}}} |s_{q}(z)|^{2}\ dV(z)
\end{align}

By compactness of $X$ we can choose a convergent subsequence $\{w_{q}^{\ast}\}$ of $w_{q}$ to some $w^{\ast}\in X$ such that there is a local chart $(U,\varphi)$ where 
\begin{align*}
B\left(w_{q}^{\ast},\frac{R}{\sqrt{k_{q}}}\right)\subset U,\ \forall q\in\N
\end{align*}
and $\varphi(w^{\ast})=0$.

By the positivity of $i\partial\overline{\partial}\phi$, we can make a linear change of variable so that $i\partial\overline{\partial}\phi(\varphi^{-1}(0))=i\partial\overline{\partial}|z|^{2}$. So if we consider the Taylor series of $\phi$ in the local ball $B(0,\tau)$ we obtain 
\begin{align*}
\phi(\varphi^{-1}(z))=h(z)+|z|^{2}+q(z)
\end{align*}
where $h$ is a pluriharmonic polynomial of degree less than or equal to 2, ${h(0)=\phi(\varphi^{-1}(0))}$ and $q(z)=o(|z|^{2})$. Hence, we have that
\begin{align*}
|2\phi(\varphi^{-1}(z))-2|z|^{2}-2h(z)|\leq c(\tau)\tau^{2}
\end{align*}
in $B(0,\tau)$ for nondecreasing continuous function $c(\tau)$ such that $c(0)=0$. Notice that we can use the local chart $(\varphi^{-1}(B(0,\tau{\ast})),\varphi)$ instead since $\exists\nu\in\N$, $\forall q\geq \nu$ : $B\left(w_ {q}^{\ast},\frac{R}{\sqrt{k_{q}}}\right)\subset\varphi^{-1}(B(0,\tau^{\ast}))$ and $\tau^{\ast}\leq \tau$.

Now we consider the sequence of holomorphic functions 
\begin{align*}
G_{q}(z)=f_{q}(\varphi^{-1}(z))e^{-\frac{k}{2}H(z)},\quad q\in\N
\end{align*}
where $\Re(H)=h$ and $|s_{q}|^{2}=|f_{q}|^{2}e^{-k\phi}$.

Next we need obtain that 
$$|s_{q}(w_{q}^{\ast})|^{2}\lesssim |G_{q}(\eta_{q})|^{2}$$ 
where $\varphi(w_{q}^{\ast})=\eta_{q}$, $$|s_{q}(\varphi^{-1}(z))|^{2}\gtrsim G_{q}(z)|^{2}$$ 
for a certain ball and 
\begin{align}\label{ineq3}
|G_{q}(\lambda_{q})|^{2}\gtrsim \frac{q}{m(B(\eta_{q},\delta_{q}))} \int_{B(\eta_{q},\delta_{q})\cap \varphi(E_{k_{q}})} |G_{q}(z)|^{2}\ dm(z).
\end{align}

First, for the sequence $\{w_{q}^{\ast}\}$ we have
\begin{align*}
|s_{q}(w_{q}^{\ast})|^{2}=|f_{q}(\varphi^{-1}(\eta_{q}))|^{2} e^{-k\phi(\varphi^{-1}(\eta_{q}))}\leq |f_{q}(\varphi^{-1}(\eta_{q}))|^{2}e^{-kh(\eta_{q})} e^{\frac{k}{2}c(\tau^{\ast})\tau^{\ast}}\lesssim |G_{q}(\eta_{q})|^{2}. 
\end{align*}
Furthermore for $z\in B(0,\tau^{\ast})$ we obtain
\begin{align*}
|G_{q}(z)|^{2}=|f_{q}(\varphi^{-1}(z))|^{2} e^{-kh(z)}\lesssim |s_{q}(\varphi^{-1}(z))|^{2},\quad z\in B(0,\tau^{\ast}).
\end{align*}

Finally we obtain the inequality that we need
\begin{align*}
|G_{q}(\eta_{q})|^{2}&\gtrsim |s_{q}(w_{q}^{\ast})|^{2}\gtrsim \frac{q}{V\left(B\left(w_{q}^{\ast},\frac{R}{\sqrt{k_{q}}}\right)\right)}\int_{B\left(w_{q}^{\ast},\frac{R}{\sqrt{k_{q}}}\right)\cap E_{k_q}}|s_{q}(z)|^{2}\ dV(z)\\
&\gtrsim \frac{q}{V\left(B\left(w_{q}^{\ast},\frac{R}{\sqrt{k_{q}}}\right)\right)}\int_{\varphi(B\left(w_{q}^{\ast},\frac{R}{\sqrt{k_{q}}}\right)\cap E_{k_q})} |f_{q}(\varphi^{-1}(z))|^{2}\ e^{-k\phi(\varphi^{-1}(z))}\ dm(z)\\
&\gtrsim \frac{q}{m(B(\eta_{q},\delta_{k_{q}}))}\int_{B\left(\eta_{q},\delta_{k_{q}}\right)\cap E_{k_{q}}^{\ast}} |G_{q}(z)|^{2}\ dm(z)
\end{align*}
where $E_{k_{q}}^{\ast}=\varphi(E_{k_{q}})$ and there is a ball $B(\eta_{k_{q}},\delta_{k_{q}})\subset \varphi(w_{q}^{\ast},R/\sqrt{k_{q}})$ such that $m(B(\eta_{q},\delta_{k_{q}}))\asymp V\left(B\left(w_{q}^{\ast},\frac{R}{\sqrt{k_ {q}}}\right)\right)$, because as $\varphi,\varphi^{-1}\in\mathcal{C}^{1}$ we have that $\varphi$ is bi-Lipschitz.

By means of a dilatation and a translation we send $\eta_{q}$ to the origin of $\C^{n}$, the ball $B(\eta_{q}, \delta_{k_{n}})$ to $B(0,1)\subset \C^{n}$ and the set $E_ {k_ {q}}^{\ast}$ to $E_{q}^{\ast}$. We will denote the set $\overline{B(0,1)}\cap E_{q}^{\ast}$ by $H_{q}$.

Moreover, multiplying by a constant we can assume that
\begin{align*}
\int_{B(0,1)} |\tilde{G}_{q}(z)|^{2}\ dm=1.
\end{align*}

The subharmoniticity of $|\tilde{G}_{q}|^{2}$ and the fact that $w_ {q}^{\ast}\in \mathcal{F}$ tells us that 
\begin{align*}
\varepsilon\lesssim |\tilde{G}_{q}(0)|^{2}\lesssim 1.
\end{align*}
Notice that we can modify the inequality of the definition of $\mathcal{F}$ in the same way as the expression \eqref{ineq3}.

And this property together with \eqref{ineq3} yields
\begin{align}\label{ineq4}
\frac{1}{q}\gtrsim \int_{H_{q}} |\tilde{G}_{q}(u)|^{2}\ dm(u).
\end{align}
 We have that $\{\tilde{G}_{q}\}$ is a locally bounded sequence of holomorphic functions defined in $B(0,1)$ and therefore using Montel theorem there exist a subsequence converging locally uniformly on $B(0,1)$ to some holomorphic function $G$.

We observe that the relative dense hypothesis yields 
\begin{align*}
\inf_{n} m(H_{n})>0.
\end{align*}

The Helly selection theorem guaranties the existence of a  weak-$\ast$limit $\tau$ of a subsequence of $\tau_{q}=\chi_{H_{q}} m$ such that $\tau\not\equiv 0$. Condition \eqref{ineq4} implies that $\tilde{G}=0$ $\tau$-a.e. and therefore $\supp\tau\subset\{\tilde{G}=0\}$.

We want to show that $\supp\tau$ cannot lie on a complex $(n-1)$-dimensional submanifold $S\subset \C^n$. We argue by contradiction.

By definition of $\tau_{n}$ we have $\tau_{n}(B(x,r))\leq m(B(x,r))\lesssim r^{2n}$. As this inequality is true for all $\tau_{q}$, the same holds for $\tau$. But, using the Frostman lemma we obtain that $\dim_{Haus}(\supp \tau)\geq 2n>0$.

Therefore, we have proved that (2) implies (1).\newline
\end{proof}

Now, we will generalize the previous sampling problem using a sequence of measures $\{\mu_k\}_{k}$ of $\mathfrak{M}(X)$ instead of the particular sequence of measures $\{\chi_{G_k} V\}_{k}$ of $\mathfrak{M}(X)$, where $\{G_k\}_{k}$ are measurable sets. We will solve part of this problem, the remaining part continues being an open problem.

\begin{theorem}
	For a sequence of measures $\{\mu_{k}\}$ the following are equivalent:
	\begin{enumerate}
		\item[(1)] There exists a constant $C_{1}>0$ such that for all $k\in\N$,
		\begin{align*}
		\int_{X} |s(z)|^{2}\ d\mu_{k}(z)\leq C_{1} \int_{X} |s(z)|^{2}\ dV(z)
		\end{align*}
		for every $s\in H^{0}(L^{k})$.
		\item[(2)] There exists a constant $C_{2}>0$ such that for all $k\in\N$,
		\begin{align*}
		T[\mu_k](z)=\int_{X} \left|\frac{\Pi_{k}(w,z)}{\sqrt{|\Pi_{k}(z,z)}|}\right|^{2}d\mu_{k}(w)\leq C_{2} 
		\end{align*}
		for every $z\in X$, that is, the sequence of Berezin transforms $\{T[\mu_k]\}_{k}$ are uniformly bounded in $X$.
		\item[(3)] There is a constant $C_{3}>0$ such that 
		\begin{align*}
		\mu_{k}(B(z,1/\sqrt{k}))\leq \frac{C_{2}}{k^{n}},\qquad z\in X.
		\end{align*}
	\end{enumerate}
\end{theorem}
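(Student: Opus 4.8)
The plan is to prove the equivalences via the cycle $(1)\Leftrightarrow(2)$ and $(2)\Leftrightarrow(3)$, since the Berezin transform $T[\mu_k]$ is the natural bridge between the operator-norm condition and the pointwise mass condition. First I would establish $(1)\Leftrightarrow(2)$ by recognizing that the quantity $\int_X |s|^2\,d\mu_k$ is controlled by $T[\mu_k]$ in a standard way: expand $s=\sum_j c_j s_j^{(k)}$ in an orthonormal basis, so that $\int_X|s|^2\,d\mu_k = \sum_{i,j} c_i\overline{c_j}\int_X \langle s_i^{(k)},s_j^{(k)}\rangle\,d\mu_k$, and observe that the Toeplitz-type operator with symbol $\mu_k$ has norm comparable to $\sup_z T[\mu_k](z)$. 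Concretely, the upper bound $\int_X|s|^2\,d\mu_k \le (\sup_z T[\mu_k](z))\,\|s\|^2$ follows from the reproducing formula $s(z)=\int_X\langle s(w),\Pi_k(z,w)\rangle\,dV(w)$ combined with Cauchy--Schwarz, and conversely testing $(1)$ against the normalized peak sections $g_{k,z}(w)=\Phi_{k,z}(w)/\sqrt{|\Pi_k(z,z)|}$ from Lemma~\ref{L1} (or rather their analogue $\Pi_k(\cdot,z)/\sqrt{|\Pi_k(z,z)|}$) gives $\int_X |g_{k,z}|^2\,d\mu_k = T[\mu_k](z) \le C_1$, so $\sup_z T[\mu_k](z)\le C_1$. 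This direction is essentially a soft functional-analytic argument.

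For $(2)\Rightarrow(3)$ I would simply restrict the integral defining $T[\mu_k](z)$ to the ball $B(z,1/\sqrt k)$: on this ball the diagonal estimate $|\Pi_k(z,z)|\asymp k^n$ together with the fact that $|\Pi_k(w,z)|\gtrsim k^n$ for $w\in B(z,\rho/\sqrt k)$ with $\rho$ small enough (which follows from the off-diagonal estimate $|\Pi_k(w,z)|\lesssim k^n e^{-c\sqrt k\,d(w,z)}$ being essentially sharp near the diagonal, or from the local model computation) gives $\left|\Pi_k(w,z)/\sqrt{|\Pi_k(z,z)|}\right|^2 \gtrsim k^n$, hence $C_2 \ge T[\mu_k](z) \gtrsim k^n \mu_k(B(z,\rho/\sqrt k))$, which after adjusting constants yields $(3)$. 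The delicate point here is justifying the lower bound $|\Pi_k(w,z)|\gtrsim k^n$ on a definite ball $B(z,\rho/\sqrt k)$ — this requires either a near-diagonal asymptotic expansion of the Bergman kernel (Tian--Zelditch--Catlin type, or the local model $\Pi_k(w,z)\sim k^n e^{k\psi(w,z)}$) or a continuity/normal-families argument in rescaled coordinates as in the proof of Theorem~\ref{TeoLogSe}; I would invoke the kernel asymptotics referenced in Lemma~\ref{lemm}.

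For $(3)\Rightarrow(2)$ I would dyadically decompose $X$ into annuli around the fixed point $z$: write $X = B(z,1/\sqrt k) \cup \bigcup_{j\ge 0} A_j$ where $A_j = B(z,2^{j+1}/\sqrt k)\setminus B(z,2^j/\sqrt k)$. On $A_j$ the off-diagonal estimate gives $\left|\Pi_k(w,z)/\sqrt{|\Pi_k(z,z)|}\right|^2 \lesssim k^n e^{-2c\sqrt k\,d(w,z)} \lesssim k^n e^{-2c\,2^j}$. To bound $\mu_k(A_j)$ I would cover $A_j$ by roughly $2^{jn}$ (up to constants, by a volume/packing argument in the Riemannian manifold) balls of radius $1/\sqrt k$, so that $(3)$ gives $\mu_k(A_j)\lesssim 2^{jn}\cdot C_3 k^{-n}$. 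Then $T[\mu_k](z) \lesssim \sum_{j\ge 0} k^n e^{-2c\,2^j}\cdot 2^{jn} k^{-n} = \sum_{j\ge 0} 2^{jn} e^{-2c\,2^j} < \infty$, a convergent series independent of $k$ and $z$, plus the contribution $k^n\cdot C_3 k^{-n}$ from the central ball; this gives the uniform bound $C_2$. The main obstacle in this step — and the one place I would spend real care — is the covering count: one needs that any ball of radius $r$ in $X$ can be covered by $\lesssim (r/\rho)^n$ balls of radius $\rho$ uniformly in the center, which holds since $X$ is compact (bounded geometry), but the constants must be tracked to be $k$-independent. Everything else reduces to the geometric-series bound and the kernel estimates already granted by Lemma~\ref{lemm}.
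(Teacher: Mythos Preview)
Your implications $(1)\Rightarrow(2)$, $(2)\Rightarrow(3)$, and $(3)\Rightarrow(2)$ are essentially correct (modulo the covering count in $(3)\Rightarrow(2)$, which should be $\lesssim 2^{2nj}$ rather than $2^{jn}$ since $X$ has real dimension $2n$; the series still converges). The real gap is your claimed direct implication $(2)\Rightarrow(1)$.

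The assertion that ``the Toeplitz-type operator with symbol $\mu_k$ has norm comparable to $\sup_z T[\mu_k](z)$'' is not a soft functional-analytic fact: for a positive operator $A$ on a reproducing-kernel Hilbert space one always has $\sup_z\langle Ak_z,k_z\rangle\le\|A\|$, but the reverse inequality fails in general (already for Carleson measures on the Bergman space of the disc there are positive measures with bounded Berezin transform that are not Carleson). Your concrete justification, ``reproducing formula plus Cauchy--Schwarz'', yields only $|s(z)|^2\le\|s\|^2\,|\Pi_k(z,z)|$, and integrating this against $d\mu_k$ produces $\|s\|^2\int_X|\Pi_k(z,z)|\,d\mu_k(z)\asymp k^n\mu_k(X)\,\|s\|^2$, which is not controlled by $\sup_z T[\mu_k](z)$. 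So as written your cycle does not close: you have $(1)\Rightarrow(2)\Leftrightarrow(3)$ but no route back to $(1)$.

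The paper closes the loop differently, through $(3)\Rightarrow(1)$, using the sub-mean value inequality for sections of $L^k$: there is a constant $Q>0$ with
\[
|s(w)|^2\le Qk^n\int_{B(w,1/\sqrt{k})}|s(z)|^2\,dV(z),\qquad s\in H^0(L^k),\ w\in X.
\]
Integrating in $d\mu_k(w)$, applying Fubini, and invoking $(3)$ gives $\int_X|s|^2\,d\mu_k\le QC_3\|s\|^2$ in one line. This is the missing ingredient; once you have it, your $(3)\Rightarrow(2)$ via dyadic annuli becomes logically redundant (though it is a valid alternative proof of that implication --- the paper never proves $(3)\Rightarrow(2)$ directly). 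For $(2)\Rightarrow(3)$ the paper obtains the near-diagonal lower bound $|\Pi_k(w,z)|^2/|\Pi_k(z,z)|\gtrsim k^n$ on $B(z,\varepsilon/\sqrt{k})$ not from an asymptotic expansion but from a gradient estimate for $|\Pi_k(\cdot,z)|^2/|\Pi_k(z,z)|$ combined with the fundamental theorem of calculus, and then passes from radius $\varepsilon/\sqrt{k}$ to radius $1/\sqrt{k}$ by a Vitali covering argument.
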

\begin{proof}
	The proof that (1) implies (2) is trivial because $\Pi_{k}(z,w) \in H^{0}(L^ {k})$ for each $w\in X$ and $k\in\N$.\newline
	 
	 To prove that (2) implies (3) is enough the following lemma.
	 
	 \begin{lemma}\label{lem34}
	 There are constants $\varepsilon>0$, $M>0$ and $k_{0}\in\N$ such that
	 \begin{align}\label{ineqcar1}
	 \left|\frac{\Pi_{k}(w,z)}{\sqrt{|\Pi_{k}(z,z)|}}\right|^{2}\geq M k^{n}>0
	 \end{align} 
	 for every $w\in B(z,\varepsilon/\sqrt{k})$ and $k>k_{0}$.	
	 \end{lemma} 
 \begin{proof}
 	 To prove \eqref{ineqcar1} we use that 
 \begin{align*}
 \left|\frac{\Pi_{k}(z,z)}{\sqrt{|\Pi_{k}(z,z)|}}\right|^{2}-\left|\frac{\Pi_{k}(w,z)}{\sqrt{|\Pi_{k}(z,z)|}}\right|^{2}=\int_{0}^{1} \frac{d}{dt}\left(\left|\frac{\Pi_{k}(w-(z-w)t,z)}{\sqrt{|\Pi_{k}(z,z)|}}\right|^{2}\right)dt 
 \end{align*}
 and by the bound of the derivatives in \cite[pag. 434]{LevJoa} 
 \begin{align*}
 \left|\frac{\partial}{\partial \xi_{j}}\left|\frac{\Pi_{k}(w,z)}{\sqrt{|\Pi_{k}(z,z)|}}\right|^{2}\right|  \leq C \sqrt{k} \sup_{\gamma\in U_{k}(w)}   \left|\frac{\Pi_{k}(\gamma,z)}{\sqrt{|\Pi_{k}(z,z)|}}\right|^{2}
 \end{align*}
 where
 \begin{align*}
 U_{k}(w)=\{x\in X\:\ |z_{j}-w_{j}|<1/\sqrt{k},(j=1,\dots,n)\},
 \end{align*}
 together with the sub-mean value property in \cite[pag. 432 ]{LevJoa}, we obtain that
 \begin{align*}
 \left|\frac{\Pi_{k}(z,z)}{\sqrt{|\Pi_{k}(z,z)|}}\right|^{2}-\left|\frac{\Pi_{k}(w,z)}{\sqrt{|\Pi_{k}(z,z)|}}\right|^{2}\leq M'k^{\frac{2n+1}{2}} \sqrt{n}  |z-w|.
 \end{align*}
 
 By the diagonal estimate of Lemma~\ref{lemm} we have
 \begin{align*}
 \left|\frac{\Pi_{k}(w,z)}{\sqrt{|\Pi_{k}(z,z)|}}\right|^{2}\geq D k^{n}-M'k^{\frac{2n+1}{2}}\sqrt{n} |z-w|,
 \end{align*}
 where $D>0$ is a certain constant.
 
 Then, if we consider the ball $B(z,\varepsilon/\sqrt{k})$ where $\varepsilon<\frac{D}{2M'\sqrt{n}}$ we have
 \begin{align*}
 \left|\frac{\Pi_{k}(w,z)}{\sqrt{|\Pi_{k}(z,z)|}}\right|^{2}\geq (D-M'\varepsilon\sqrt{n})k^{n}>\frac{Dk^{n}}{2}>0
 \end{align*}
 for $w\in B(z,\varepsilon/\sqrt{k})$.
 \end{proof}
 
 Applying  Lemma~\ref{lem34} we have that there are constants $\varepsilon>0$, $M>0$ and $k_{0}\in\N$ such that
 \begin{align*}
 M k^{n} \mu_{k}(B(z,\varepsilon/\sqrt{k}))\leq \int_{B(z,\varepsilon/\sqrt{k})} \left|\frac{\Pi_{k}(w,z)}{\sqrt{|\Pi_{k}(z,z)|}}\right|^{2}\ d\mu_{k}(w) \leq T[\mu_k](z)\leq C_{2}
 \end{align*}
 for every $z\in X$ and $k>k_{0}$. 
	
	We take a covering of the ball $B(z,1/\sqrt{k})$ by a collection of balls $B(z_{j},\varepsilon/5\sqrt{k})$, $z_{j}\in B(z,1/\sqrt{k})$ and applying the Vitali covering lemma we obtain a sub-collection of these balls which are disjoint such that
	\begin{align*}
	\cup B(z_{j},\varepsilon/5\sqrt{k})\subset \cup B(z_{j_{q}},\varepsilon/\sqrt{k}).
	\end{align*}
	Moreover, as 
	\begin{align*}
	B(z_{j_{q}},\varepsilon/\sqrt{k})\subset B(z,2/\sqrt{k})
	\end{align*}
	by means of the volume we have that $$N\cdot \left(\frac{\varepsilon}{\sqrt{k}}\right)^{2n}\lesssim \left(\frac{2}{\sqrt{k}}\right)^{2n},$$ that is, $	N\lesssim \left(\frac{2}{\varepsilon}\right)^{2n}$ where $N$ is the number of balls of the sub-collection. Therefore, we conclude 
	\begin{align*}
	\mu_{k}(B(z,1/\sqrt{k}))\lesssim\left(\frac{2}{\varepsilon}\right)^{2n} m_{k}(B(z_{j_{q}}),\varepsilon/\sqrt{k}) \lesssim \frac{1}{k^{n}}. 
	\end{align*}

	 Notice that for $k\leq k_{0}$ (3) holds immediately since $\mu_{k}(B(z,1/\sqrt{k}))$ are bounded by $\mu_{k}(X)$.\newline

	 All we need to prove that (3) implies (1) is the existence of a constant $Q>0$ such that for all $w\in X$
	\begin{align}\label{eqafinal}
	|s(w)|^{2}\leq Q k^{n} \int_{B(w,1/\sqrt{k})} |s(z)|^{2}\ dV(z),\quad w\in X.
	\end{align}
	
	This is proved by the sub-mean value property in \cite[pag. 432 ]{LevJoa}. Indeed, if this is the case then
	\begin{align*}
	\int_{X} |s(w)|^{2}\ d\mu_{k} &\leq Q k^{n} \int_{X} |s(z)|^{2}\left(\int_{X} \chi_{B(w,1/\sqrt{k})}(z)\ d\mu_{k}(w)\right)\ dV(z)\\
	&\lesssim \int_{X} |s(z)|^{2}\ dV(z)
	\end{align*}
	for every $s\in H^{0}(L^{k})$.
	
\end{proof}

\begin{example}[Tautological line bundle]
	We will apply the Theorem~\ref{TeoLogSe} to the complex projective space $\mathbb{CP}^{n}$ with the hyperplane bundle $\mathcal{O}(1)$, endowed with the Fubini-Study metric. Notice that the holomorphic sections to $\mathcal{O}(k)$, the k'th power of $\mathcal{O}(1)$, can be identified with the homogeneous polynomials of degree $k$ in homogeneous coordinates.
	
Using Theorem~\ref{TeoLogSe} we obtain that for a given sequence of measurable subsets $\{\Omega_{k}\}_{k}$ in $\C^{n}$, we have that the following statements are equivalent:
	\begin{enumerate}
		\item[(1)] For all $k\in\N$, it is verified that
		\begin{align*}
		\int_{\Omega_{k}} \frac{|p_{k}(z)|^{2}}{(1+|z|^{2})^{k}}\ \frac{dV(z)}{(1+|z|^{2})^{n+1}}\asymp \int_{\C^{n}} \frac{|p_{k}(z)|^{2}}{(1+|z|^{2})^{k}}\ \frac{dV(z)}{(1+|z|^{2})^{n+1}}
		\end{align*}
		for every polynomial $p_{k}$ of degree less or equal than $k$ in $n$ variables.
		\item[(2)] There is a radius $R$ such that 
		\begin{align*}
	\int_{\Omega_{k}} \frac{\chi_{H(z,R/\sqrt{k})}(w)}{(1+|w|^{2})^{{n+1}}}\ dV(w)\asymp \left(\frac{R}{\sqrt{k}}\right)^{2n}
			\end{align*}
			for all $z\in \C^{n}$ and $k\in\N$, where $$H(z,R/\sqrt{k})=\left\{w\in \C^{n}\ :\ |z-w|<\left|\tan\left(\frac{R}{\sqrt{k}}\right)\right|\cdot |1+z\overline{w}|\right\}.$$
	\end{enumerate}

Notice that the measure $V$ is the Lebesgue measure on $\C^{n}$.
\end{example}

\section*{Acknowledgement}
I would like to express my deep appreciation to Joaquim Ortega Cerdà for his unconditional support and advices to improve in mathematics. In addition, I would like  to show my gratitude to my family and friends of Canary Islands for their encouragement and help to fulfil my dreams.

\bibliographystyle{amsalpha}
\bibliography{biblio}

\providecommand{\bysame}{\leavevmode\hbox to3em{\hrulefill}\thinspace}
\providecommand{\MR}{\relax\ifhmode\unskip\space\fi MR }
% \MRhref is called by the amsart/book/proc definition of \MR.
\providecommand{\MRhref}[2]{%
  \href{http://www.ams.org/mathscinet-getitem?mr=#1}{#2}
}
\providecommand{\href}[2]{#2}
\begin{thebibliography}{MOC08}

\bibitem[Ber03]{BoB}
B.~Berndtsson, \emph{Bergman kernels related to hermitian line bundles over
  compact complex manifolds}, Explorations in complex and Riemannian geometry,
  Contemp. Math. \textbf{332} (2003), 1--17, \MRNUMB{2016088}\
  \Zbl{1038.32003}.

\bibitem[HJ94]{VHBJ}
V.~Havin and B.~Jöricke, \emph{The uncertainty principle in harmonic
  analysis}, Springer, 1994, ISBN: 978-3-642-78377-7.

\bibitem[Lin01]{LIND}
N.~Lindholm, \emph{Sampling in weighted {$L^{p}$} spaces of entire functions in
  {$\mathbb{C}^{n}$} and estimates of the bergman kernel}, Journal of
  Functional Analysis \textbf{182} (2001), no.~18, 390--426,
  \doi{10.1006/jfan.2000.3733} \ \MRNUMB{1828799} \ \Zbl{1013.32008}.

\bibitem[LOC12]{LevJoa}
N.~Lev and J.~Ortega-Cerdà, \emph{Equidistribution estimates for fekete points
  on complex manifolds}, J. Eur. Math. Soc \textbf{18} (2012), no.~2, 425--464,
  \doi{10.4171/JEMS/594}.

\bibitem[LS74]{LogSer}
V.N. Logvinenko and J.F. Sereda, \emph{Equivalent norms in spaces of entire
  functions of exponential type}, Teor. funktsii, funkt. analiz i ich
  prilozhenia \textbf{20} (1974), 62--78.

\bibitem[Lue81]{Lue1}
D.H. Luecking, \emph{{I}nequalities on {B}ergman {S}paces}, Illinois Journal of
  Mathematics \textbf{25} (1981), no.~1, 1--11, \MRNUMB{602889} \
  \Zbl{0437.30025}.

\bibitem[MOC08]{JM}
J.~Marzo and J.~Ortega-Cerdà, \emph{Equivalent norms for polynomials on the
  sphere}, International Mathematics Research Notices \textbf{2008} (2008),
  \doi{10.1093/imrn/rnm154}.

\end{thebibliography}
\nocite{*}
%\printbibliography

\end{document}